\documentclass[12pt,english]{amsart}

\usepackage[right=2.5cm, left=2.5cm, top=2.5cm, bottom=2.5cm]{geometry}
\usepackage[all]{xy}
\usepackage{amsmath}
\usepackage{amssymb}
\usepackage{appendix}
\usepackage{hyperref}
\newcommand{\Qq}{\mathbb{Q}}

\linespread{1.3}

\numberwithin{equation}{section}

{\theoremstyle{plain}
\newtheorem{theorem}{Theorem}[section]
\newtheorem{lemma}[theorem]{Lemma}
\newtheorem{proposition}[theorem]{Proposition}
\newtheorem{conjecture}[theorem]{Conjecture}
\newtheorem{corollary}[theorem]{Corollary}}

{\theoremstyle{remark}
\newtheorem{remark}[theorem]{Remark}}

\pagestyle{plain}

\title{Embedding problems for automorphism groups of field extensions}

\author{Arno Fehm}
\email{arno.fehm@tu-dresden.de}

\author{Fran\c cois Legrand}
\email{francois.legrand@tu-dresden.de}

\author{Elad Paran}
\email{paran@openu.ac.il}

\address{Institut f\"ur Algebra, Fachrichtung Mathematik, TU Dresden, 01062 Dresden, Germany}

\address{Institut f\"ur Algebra, Fachrichtung Mathematik, TU Dresden, 01062 Dresden, Germany}

\address{Department of Mathematics and Computer Science, the Open University of Israel, Ra'anana 4353701, Israel}

\date{\today}

\subjclass[2010]{12E25, 12E30, 12F12, 20B25}

\keywords{Finite embedding problems, automorphism groups, the D\`ebes-Deschamps conjecture, Hilbertian fields}

\begin{document}

\def\commutatif{\ar@{}[lldd]|{\circlearrowleft}}
\def\commutative{\ar@{}[rrdd]|{\circlearrowleft}}

\maketitle

\begin{abstract}
A central conjecture in inverse Galois theory, proposed by D\`ebes and Des-champs, asserts that every finite split embedding problem over an arbitrary field can be regularly solved. We give an unconditional proof of a consequence of this conjecture, namely that such embedding problems can be regularly solved if one waives the requirement that the solution fields are normal. This extends previous results of M. Fried, Takahashi, Deschamps, and the last two authors concerning the realization of finite groups as automorphism groups of field extensions.
\end{abstract}

\section{Introduction} \label{sec:intro}

Understanding the structure of the absolute Galois group ${\rm{G}}_{\Qq}$ of $\mathbb{Q}$ is one of the central objectives in algebraic number theory, and has inspired a number of very different approaches. One of them, which shall be our focus, is classical inverse Galois theory in the tradition of Hilbert and E.\ Noether. The first question here, which was studied already in the late 19th century and is yet still open, is the inverse Galois problem, which asks whether all finite groups occur as quotients of ${\rm{G}}_{\Qq}$, i.e.,~as Galois groups of Galois extensions of $\mathbb{Q}$. More information on ${\rm{G}}_{\Qq}$ would be obtained by knowing which finite {\em embedding problems} over $\Qq$ are solvable, or which finite groups have geometric, so-called {\em regular} realizations over $\Qq$ (see \cite{Vol96, MM99, FJ08} or below for more details). 

The central conjecture in this area, which suggests answers to all of these questions, consistent with what is known and expected about each of them, and without needing to restrict to the field $\mathbb{Q}$, was formulated by D\`ebes and Deschamps (see \cite[\S2.2]{DD97b}):

\begin{conjecture} \label{conj:DD1}
Let $k$ be a field, $G$ a finite group, $L$ a finite Galois extension of the rational function field $k(T)$, and
$\alpha : G \rightarrow {\rm{Aut}}(L/k(T))$ an epimorphism that has a section\footnote{i.e., there exists an embedding $\alpha' : {\rm{Aut}}(L/k(T)) \rightarrow G$ such that $\alpha \circ \alpha' = {\rm{id}}_{{\rm{Aut}}(L/k(T))}$.}. Then there are a finite Galois extension $E/k(T)$ with $L \subseteq E$ and $E \cap \overline{k} = L \cap \overline{k}$, and an isomorphism $\beta : {\rm{Aut}}(E/k(T)) \rightarrow G$ such that $\alpha \circ \beta$ is the restriction map ${\rm{Aut}}(E/k(T)) \rightarrow {\rm{Aut}}(L/k(T))$.
\end{conjecture}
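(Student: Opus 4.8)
The plan is to translate the conjecture into a geometric regular split embedding problem for covers of the line, solve it over the algebraic closure, and then descend. Since $L/k(T)$ is Galois we have $H := {\rm{Aut}}(L/k(T)) = {\rm{Gal}}(L/k(T))$, and writing $N := \ker\alpha$ the section $\alpha'$ exhibits $G$ as a semidirect product $G \cong N \rtimes \alpha'(H)$. The field $E$ we seek must satisfy ${\rm{Gal}}(E/k(T)) \cong G$ with ${\rm{Gal}}(E/L) \cong N$ and $E \cap \overline{k} = L \cap \overline{k}$. Setting $k_0 := L \cap \overline{k}$, the equality $L \cap \overline{k} = k_0$ makes $L$ a regular extension of $k_0$, so $L$ is the function field of a geometrically integral curve $Y/k_0$ carrying a faithful $H$-action with $Y/H = \mathbb{P}^1$. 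Thus I would rephrase the goal as: build a $G$-Galois cover $X \to \mathbb{P}^1$, defined over $k$ and dominating $Y \to \mathbb{P}^1$ $H$-equivariantly, such that $X$ remains geometrically connected — this last property being exactly the requirement $E \cap \overline{k} = L \cap \overline{k}$ forbidding new constants.

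First I would solve the problem purely geometrically. Over $\overline{k}$ the embedding problem $N \rtimes H \twoheadrightarrow H$ for the cover $Y_{\overline{k}} \to \mathbb{P}^1_{\overline{k}}$ is always solvable: in characteristic $0$ this is the Riemann Existence Theorem for the punctured line, where one enlarges the branch locus to obtain the extra generators needed to realize $N$, and in positive characteristic one uses the tame/étale analogue together with specialization of branch points. This yields a $G$-Galois cover $X_{\overline{k}} \to \mathbb{P}^1_{\overline{k}}$ with the prescribed quotient $Y_{\overline{k}}$, and one retains considerable freedom to arrange its branch locus and inertia to be Galois-stable.

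The decisive and hardest step is to descend from $\overline{k}$ to $k$. The mechanism the section is designed to feed is twisting: because $G \cong N \rtimes \alpha'(H)$, the conjugation action of $H$ on $N$ is prescribed, so one tries to construct $X$ as a twist of a reference $G$-cover by the given $H$-cover $Y$, the twisting being governed by the cocycle attached to $Y \to \mathbb{P}^1$ together with the $H$-action on $N$. Equivalently, one wants a $k$-rational point on the relevant Hurwitz space of $G$-covers lying over the point corresponding to $Y$, and the splitting should reduce the field-of-moduli-versus-field-of-definition obstruction to a cohomology class that the semidirect-product structure lets one trivialise. Geometric connectedness of $X$ must be preserved throughout, which is precisely what guarantees $E \cap \overline{k} = L \cap \overline{k}$ and hence $\alpha \circ \beta$ being the restriction map.

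I expect this descent to be the main obstacle, and indeed the reason the statement is stated as a conjecture. Over \emph{ample} (large) fields it can be carried out by Harbater--Pop patching, which regularly solves every finite split embedding problem over $k(T)$; over Hilbertian fields one can sometimes specialise a geometric solution and control the constant field. But for an arbitrary base field $k$ there is no general device to kill the field-of-definition obstruction, so the $\overline{k}$-solution need not descend while staying geometrically connected. This is exactly why, instead of proving Conjecture~\ref{conj:DD1} outright, one settles for the weaker unconditional consequence obtained by waiving the normality of the solution field.
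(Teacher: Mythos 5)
You have not produced a proof, and you should not be penalized for that: the statement you were given is Conjecture~\ref{conj:DD1}, the D\`ebes--Deschamps conjecture, which the paper itself does not prove and which remains open. The paper records that it is known only when $k$ is ample (by \cite[Main Theorem A]{Pop96} and \cite[Theorem 2]{HJ98b}) and that no counterexample is known; the paper's actual contribution (Theorem~\ref{thm:intro} / Theorem~\ref{thm:main}) is the unconditional weakening in which one gives up normality of $E/k(T)$. So there is no ``paper proof'' to compare your attempt against, and your closing paragraph correctly identifies the situation.

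On the substance of your sketch: the translation into a regular split embedding problem for $G$-covers of $\mathbb{P}^1$ dominating $Y \to \mathbb{P}^1$, the solvability over $\overline{k}$ (Riemann Existence in characteristic $0$, or the fact that $\overline{k}$ is ample in general, so Pop's theorem applies), and the identification of the descent step --- twisting, Hurwitz spaces, field of moduli versus field of definition --- as the crux, are all accurate and reflect the known strategies in the literature. Your diagnosis that the splitting hypothesis is ``designed to feed'' a twisting construction, and that over an arbitrary field there is no general device to kill the field-of-definition obstruction while preserving geometric connectedness, is exactly why the conjecture is open. The one caution: be careful not to present the geometric step in positive characteristic as routine (wild ramification means the \'etale fundamental group of the punctured line is not given by a naive presentation); the clean way to get the $\overline{k}$-solution is to invoke ampleness of $\overline{k}$ rather than an explicit generators-and-relations argument. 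If you want an unconditional result along these lines, the route the paper takes is instructive: weaken the target so that $E/k(T)$ need only be separable with ${\rm Aut}(E/k(T)) \cong G$, which replaces the intractable descent by a construction with a rigid cubic extension (Proposition~\ref{lemma 11}) grafted onto a geometric Galois solution obtained after a regular finitely generated base change (Proposition~\ref{prop:sssecI}).
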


\noindent
In particular, Conjecture \ref{conj:DD1} would imply that every finite group occurs as the Galois group of a Galois extension of $\mathbb{Q}$, or, more generally, of every field that is {\em Hilbertian}, i.e., for which Hilbert's irreducibility theorem holds\footnote{For example, all global fields are Hilbertian. See, e.g., \cite{FJ08} for more on Hilbertian fields.}. So far, Conjecture \ref{conj:DD1} has been proved only if $k$ is an ample field\footnote{Recall that a field $k$ is {\it{ample}} (or {\it{large}}) if every smooth $k$-curve has either zero or infinitely many $k$-rational points. See, e.g., \cite{Jar11} and \cite{BSF13} for more about ample fields.} (see \cite[Main Theorem A]{Pop96} and \cite[Theorem 2]{HJ98b}), and no counterexample is known.

However, certain consequences of Conjecture \ref{conj:DD1}, like the inverse Galois problem, were proven in a weak form. Starting with \cite{FK78}, the easier question whether every finite group occurs as the automorphism group of a finite extension of $\mathbb{Q}$, not necessarily Galois, was studied. Clearly, a positive answer to this question is necessary for a positive solution to the inverse Galois problem, hence the interest in the question. The work \cite{LP18}, which extends previous results of M. Fried \cite{Fri80} and Takahashi \cite{Tak80} on this question, shows that indeed all finite groups occur as automorphism groups of finite extensions of any Hilbertian field. In \cite{DL18}, this was strengthened to `regular' realizations (for an arbitrary field); see th\'eor\`eme 1 of that paper for more details.

In light of these results, it is natural to ask whether in fact Conjecture \ref{conj:DD1} holds unconditionally if one waives the requirement that the extension $E/k(T)$ is normal. Once again, an affirmative answer to this question is necessary for an affirmative answer to the conjecture. In the present work, we show that this indeed is the case, thereby also generalizing the results mentioned above:

\begin{theorem} \label{thm:intro}
The statement of Conjecture \ref{conj:DD1} holds unconditionally if we do not require the extension $E/k(T)$ to be normal.
\end{theorem}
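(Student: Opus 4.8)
The plan is to exploit the fact that, once normality of $E/k(T)$ is dropped, one is free to realize $G$ not as a Galois group but as a \emph{normalizer quotient}: if $M/k(T)$ is a finite Galois extension with group $\Gamma$ and $\Delta \le \Gamma$ is any subgroup, then the fixed field $E = M^{\Delta}$ satisfies $\mathrm{Aut}(E/k(T)) \cong N_{\Gamma}(\Delta)/\Delta$. This extra freedom lets me replace the (open) split embedding problem for $G$ itself by an embedding problem for a larger, more convenient group $\Gamma$ that can be solved unconditionally over the Hilbertian field $k(T)$.

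First I would reformulate the data. Since $L/k(T)$ is Galois, $\mathrm{Aut}(L/k(T)) = \mathrm{Gal}(L/k(T)) =: H$, and the section $\alpha'$ identifies $G$ with a split extension $N \rtimes H$, where $N = \ker(\alpha)$. Thus the goal becomes: produce a finite Galois $M/k(T)$ containing $L$ with $M \cap \overline{k} = L \cap \overline{k}$, together with a subgroup $\Delta \le \Gamma := \mathrm{Gal}(M/k(T))$ and the normal subgroup $\Delta' := \mathrm{Gal}(M/L) \trianglelefteq \Gamma$ (so $\Gamma/\Delta' \cong H$), such that $\Delta \subseteq \Delta'$, $N_{\Gamma}(\Delta)/\Delta \cong G$, and the map $N_{\Gamma}(\Delta)/\Delta \to \Gamma/\Delta'$, $g\Delta \mapsto g\Delta'$, corresponds under this isomorphism to $\alpha$. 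Setting $E := M^{\Delta}$ then yields $L \subseteq E$, $\mathrm{Aut}(E/k(T)) \cong G$ with restriction to $L$ equal to $\alpha$, while regularity $E \cap \overline{k} = L \cap \overline{k}$ is forced by $L \subseteq E \subseteq M$.

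The heart of the argument is the group-theoretic construction of $(\Gamma, \Delta)$ together with an unconditional realization of $\Gamma$ over $L/k(T)$. I would take $\Gamma$ to be of induced/wreath type over $H$ — for instance $\Gamma = A \wr H = A^{H} \rtimes H$ for a suitably chosen finite group $A$ (into which $N$, or an auxiliary group controlling $N$, embeds) — so that $\Delta' = A^{H}$ and the embedding problem $\Gamma \twoheadrightarrow H$ is a split embedding problem with an \emph{induced} kernel. Such induced problems over $k(T)$ can be solved regularly and unconditionally: starting from a regular realization of $A$ over $k(T)$ (e.g.\ $A$ a symmetric group, which is regularly realizable over any field) and using the Hilbertianity of $k(T)$, hence of $L$, to make the $|H|$ conjugate copies linearly disjoint over $L$, one assembles a regular $A^{H}$-extension of $L$ on which $H$ acts by permuting the factors. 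Inside $\Gamma$ I would then choose $\Delta \le A^{H}$ whose normalizer quotient reproduces exactly $G = N \rtimes H$, compatibly with $\alpha$.

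I expect the main obstacle to be precisely this group-theoretic step: choosing $A$ and $\Delta$ so that $N_{\Gamma}(\Delta)/\Delta$ is \emph{exactly} $G$ — neither too small nor contaminated by spurious automorphisms arising from the large kernel $A^{H}$ — while simultaneously keeping the restriction map compatible with the prescribed $\alpha$. Controlling the normalizer $N_{\Gamma}(\Delta)$ inside a wreath product, so that the extra symmetries of $A^{H}$ do not enlarge the automorphism group, is the delicate point; the field-theoretic realization of $\Gamma$ and the verification of regularity should then follow comparatively routinely from the Hilbertianity of $k(T)$ and the standard fixed-field dictionary.
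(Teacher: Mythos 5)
Your route is genuinely different from the paper's, but it breaks down at exactly the point you defer: constructing $A$ and $\Delta\le A^{H}$ with $N_{A\wr H}(\Delta)/\Delta\cong G$ compatibly with $\alpha$ is not a delicate-but-routine step, it is the entire content of the theorem, and inside the wreath-product framework it meets concrete obstructions. First, $\Delta$ cannot be chosen normal in $A^{H}$ and stable under the translation action of $H$: such a $\Delta$ would be normal in $\Gamma=A\wr H$, so $E=M^{\Delta}$ would be \emph{Galois} over $k(T)$ with group $G$, and you would have proved Conjecture \ref{conj:DD1} itself. Indeed, a normal $H$-stable $\Delta$ with $A^{H}/\Delta\cong N$ as $H$-groups amounts to an $H$-equivariant epimorphism $A^{H}\to N$, and for $N$ nonabelian simple and $|H|>1$ no such map exists: an epimorphism from a direct product onto a nonabelian simple group factors through a single coordinate, while $H$ permutes the coordinates transitively. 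So for nonabelian kernels $\Delta$ must be non-normal, and you then need $N_{A^{H}}(\Delta)$ to be \emph{exactly} $\Delta\rtimes D_{N}$, where $D_{N}$ is the twisted diagonal copy of $N$ carrying the prescribed $H$-action. The natural candidates fail: $\Delta=B^{H}$ gives $N_{\Gamma}(\Delta)/\Delta\cong\bigl(N_{A}(B)/B\bigr)\wr H$, far larger than $N\rtimes H$, while ``small'' diagonal-type $\Delta$ have normalizers containing full direct products of centralizers. For $H=1$ such normalizer realizations are known (this is essentially how M.~Fried and Takahashi proceeded), but the $H$-equivariant version --- forcing the coordinate-permutation action to induce precisely the given action of $H$ on $N$ through a normalizer computed inside a direct product --- is exactly the new difficulty of the embedding-problem setting, and your proposal supplies no construction and no evidence that one exists.

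A secondary, more repairable gap: your ``routine'' field-theoretic step also hides substance. The field-crossing argument produces a regular $A\wr H$-solution over $k(T)(\mathbf{S})$ with $|H|$ new variables, and descending to $k(T)$ requires a specialization of $\mathbf{S}$ preserving both the Galois group and regularity over $\overline{k}$; this is precisely Proposition \ref{lemma:0}(2) of the paper, which rests on nontrivial irreducibility-preserving specialization results, not on bare Hilbertianity of $k(T)$ plus linear disjointness of conjugates. By contrast, the paper sidesteps your group-theoretic problem entirely by the opposite mechanism: instead of enlarging the group over the same base, it realizes $G$ itself as a Galois group $\mathrm{Gal}(N/k_{0}(Z))$ over an \emph{enlarged base} $k_{0}\supseteq L'=L^{\mathrm{Aut}(L/k(T))}$ (Proposition \ref{prop:sssecI}, via PAC fields, Pop's theorem, and a descent lemma), writes $k_{0}=k(\mathbf{Y},y)$, and adjoins a root $x$ of the rigid cubic $X^{3}+(T-y)X+(T-y)$ of Proposition \ref{lemma 11}; Lemma \ref{lem:sssecIII} then shows $\mathrm{Aut}(E/k(Z,\mathbf{Y},T))=\mathrm{Gal}(E/k_{0}(Z,T,x))\cong G$, so non-normality is created by shrinking the base field rather than by passing to a fixed field inside a larger group, and the auxiliary variables are removed by Proposition \ref{lemma:0}. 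To salvage your approach you would have to supply the missing equivariant normalizer construction, which at present is a genuine gap.
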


\begin{corollary} \label{coro:intro}
Let $k$ be a Hilbertian field, $L/k$ a finite Galois extension, $G$ a finite group, and $\alpha:G \rightarrow {\rm{Aut}}(L/k)$ an epimorphism that has a section. Then there exist a finite separable extension $F/L$ and an isomorphism $\beta : {\rm{Aut}}(F/k) \rightarrow G$ such that $\alpha \circ \beta$ is the restriction map ${\rm{Aut}}(F/k) \rightarrow {\rm{Aut}}(L/k)$.
\end{corollary}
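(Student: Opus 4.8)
The plan is to reduce Corollary \ref{coro:intro} to Theorem \ref{thm:intro} by a standard specialization argument based on the Hilbertianity of $k$. First I would lift the data from $k$ to the rational function field $k(T)$. Since $L/k$ is finite Galois, so is $L(T)/k(T)$, and restriction to constants gives a canonical isomorphism ${\rm{Aut}}(L(T)/k(T)) \cong {\rm{Aut}}(L/k)$. Composing $\alpha$ with the inverse of this isomorphism yields an epimorphism $G \rightarrow {\rm{Aut}}(L(T)/k(T))$ that still has a section, so Theorem \ref{thm:intro} applies with $L(T)$ playing the role of the Galois extension of $k(T)$. This produces a finite separable, but not necessarily normal, extension $E/k(T)$ with $L(T) \subseteq E$ and $E \cap \overline{k} = L(T) \cap \overline{k} = L$, together with an isomorphism $\beta' : {\rm{Aut}}(E/k(T)) \rightarrow G$ whose composition with $\alpha$ is the restriction map ${\rm{Aut}}(E/k(T)) \rightarrow {\rm{Aut}}(L(T)/k(T)) \cong {\rm{Aut}}(L/k)$.

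Next I would encode ${\rm{Aut}}(E/k(T))$ group-theoretically so as to control it under specialization. Let $\widehat{E}$ be the Galois closure of $E/k(T)$, put $\Gamma = {\rm{Gal}}(\widehat{E}/k(T))$ and $H = {\rm{Gal}}(\widehat{E}/E)$, so that $H$ has trivial normal core in $\Gamma$ and ${\rm{Aut}}(E/k(T)) \cong N_\Gamma(H)/H$. Because $E \cap \overline{k} = L$, the extension $E/L(T)$ is regular, which is what will guarantee that $L$ stays algebraically closed in the specialized field. The key step is then to invoke Hilbert's irreducibility theorem: since $k$ is Hilbertian, there is a Hilbert subset of $t \in k$ for which the specialization $\widehat{E}_t/k$ is Galois with ${\rm{Gal}}(\widehat{E}_t/k) \cong \Gamma$ via the natural decomposition-group isomorphism, so that the Galois correspondence is preserved. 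Setting $F = E_t$, the elements of $L \subseteq E$ are algebraic over $k$ and hence specialize isomorphically, giving $L \subseteq F$ with $F/L$ separable.

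It remains to transport the automorphism data. For $t$ chosen as above, the triviality of the core of $H$ persists, so $\widehat{E}_t$ is the Galois closure of $F/k$ and $H$ corresponds to ${\rm{Gal}}(\widehat{E}_t/F)$; hence ${\rm{Aut}}(F/k) \cong N_\Gamma(H)/H \cong {\rm{Aut}}(E/k(T))$, and we denote by $\mathrm{sp}$ this specialization isomorphism. Defining $\beta = \beta' \circ \mathrm{sp} : {\rm{Aut}}(F/k) \rightarrow G$ gives the desired isomorphism. Since $L$ is the common constant field left fixed throughout the specialization, $\mathrm{sp}$ intertwines the two restriction maps to ${\rm{Aut}}(L/k)$; therefore $\alpha \circ \beta$ equals the restriction map ${\rm{Aut}}(F/k) \rightarrow {\rm{Aut}}(L/k)$, as required.

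I expect the main obstacle to be the third step, namely ensuring that specialization does not introduce extra automorphisms, i.e.\ that ${\rm{Aut}}(F/k)$ is computed by the same normalizer quotient $N_\Gamma(H)/H$ rather than by a larger group. This requires controlling simultaneously the Galois closure of $F/k$, the subgroup $H$, and the algebraic closure $L$ of $k$ in $F$ under specialization, which is precisely what a careful application of Hilbert's irreducibility theorem to $\widehat{E}/k(T)$ along a suitable Hilbert set preserving the regularity of $E/L(T)$ delivers. The passage from $k$ to $k(T)$ and back, by contrast, is routine.
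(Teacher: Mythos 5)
Your proposal is correct and follows essentially the same route as the paper: base change to $k(T)$, apply Theorem \ref{thm:intro} (in the paper, its generalization Theorem \ref{thm:main}) to get a regular solution $E/k(T)$, then use Hilbertianity to specialize $T$ so that the Galois closure $\widehat{E}$ specializes with full degree, which is exactly the content of the paper's Proposition \ref{lemma:0}(1). The technical point you flag as the main obstacle --- that the decomposition-group isomorphism preserves the subgroup $H$, hence the normalizer quotient ${\rm Aut}(F/k)\cong N_\Gamma(H)/H$, and restricts to the identity on $L$ so that the restriction maps are intertwined --- is precisely what the paper verifies via the conditions \eqref{eq1} and \eqref{eq1.1} in its proof of that proposition.
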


\noindent
Note that Corollary \ref{coro:intro} is new already for $k=\mathbb{Q}$. An even more general, yet more technical version of Theorem \ref{thm:intro} is given in Theorem \ref{thm:main}, where the extension $L/k(T)$ is not necessarily Galois and the epimorphism $\alpha$ does not necessarily have a section, and accordingly Corollary \ref{coro:intro} holds in this greater generality (see Corollary \ref{coro:spec}).

\vspace{3mm}

{\bf{Acknowledgements.}} We wish to thank Lior Bary-Soroker for suggesting to use \cite[Proposition 16.11.1]{FJ08} in the proof of Proposition \ref{lemma:0}, and Dan Haran for his help with Proposition \ref{prop:pac}. 

\section{Terminology and notation} \label{sec:basics}

The aim of this section is to state some terminology and notation on restriction maps, finite embedding problems, and specializations of function field extensions in our non-Galois context. For this section, let $k$ be an arbitrary field and $\overline{k}$ an algebraic closure of $k$.

\subsection{Restriction maps} \label{ssec:rm}

Let $L/k$ be a finite separable extension. Recall that ${\rm{Aut}}(L/k)$ is the automorphism group of $L/k$, i.e., the group of all isomorphisms of the field $L$ which fix $k$ pointwise. If $L/k$ is Galois, then this group is the Galois group of $L/k$, denoted by ${\rm{Gal}}(L/k)$.

Let $L/k$ and $F/M$ be two finite separable extensions with $k \subseteq M$ and $L \subseteq F$. Let $H$ be the subgroup of ${\rm{Aut}}(F/M)$ consisting of all elements fixing $L$ setwise. The restriction map
$$\left \{ \begin{array} {ccc}
H & \longrightarrow & {\rm{Aut}}(L/k) \\
\sigma & \longmapsto & \sigma{|_L}
\end{array} \right. $$
shall be denoted by ${\rm{res}}_{L/k}^{F/M}$. Note that the map ${\rm{res}}_{L/k}^{F/M}$ is not necessarily surjective. Moreover, if $N/K$ is a finite separable extension such that $M \subseteq K$ and $F \subseteq N$, then the composed map
$${\rm{res}}_{L/k}^{F/M} \circ {\rm{res}}^{N/K}_{F/M}$$
is not defined in general, and if it is, it is not necessarily equal to ${\rm{res}}^{N/K}_{L/k}$. However, these two properties hold if the domains of ${\rm{res}}_{L/k}^{F/M}$ and ${\rm{res}}^{N/K}_{F/M}$ are ${\rm{Aut}}(F/M)$ and ${\rm{Aut}}(N/K)$, respectively (in particular, if $L/k$ and $F/M$ are Galois).

Let $L/k$ be a finite separable extension and $\widehat{L}$ the Galois closure of $L$ over $k$. If $H$ is the normalizer of ${\rm{Gal}}(\widehat{L}/L)$ in ${\rm{Gal}}(\widehat{L}/k)$, then ${\rm{res}}_{L/k}^{\widehat{L}/k}$ has domain $H$, it is surjective, and it induces an isomorphism between the quotient group $H/{\rm{Gal}}(\widehat{L}/L)$ and the group ${\rm{Aut}}(L/k)$. Hence, if $M$ is any field containing $k$ that is linearly disjoint from $\widehat{L}$ over $k$, then the domain of ${\rm{res}}_{L/k}^{LM/M}$ is the whole automorphism group ${\rm{Aut}}(LM/M)$ and this map is an isomorphism from ${\rm{Aut}}(LM/M)$ to ${\rm{Aut}}(L/k)$ \footnote{If we only assume that the fields $L$ and $M$ are linearly disjoint over $k$, then the groups ${\rm{Aut}}(LM/M)$ and ${\rm{Aut}}(L/k)$ are not isomorphic in general. For example, $\Qq(\sqrt[3]{2})$ and $\Qq(e^{2i\pi/3})$ are linearly disjoint over $\Qq$ (as they have coprime degrees). But ${\rm{Aut}}(\Qq(\sqrt[3]{2}) / \Qq)$ is trivial while ${\rm{Gal}}(\Qq(\sqrt[3]{2},e^{2i\pi/3}) / \Qq(e^{2i\pi/3}))$ has order 3.}. In particular, if ${\bf{T}}$ is a finite tuple of algebraically independent indeterminates, then this holds for $M=k({\bf{T}})$, that is, the restriction map ${{\rm{res}}_{L/k}^{L({\bf{T}})/k({\bf{T}})}} : {\rm{Aut}}(L({\bf{T}})/k({\bf{T}})) \rightarrow {\rm{Aut}}(L/k)$ is an isomorphism.

\subsection{Finite embedding problems} \label{ssec:fep}

The terminology below extends standard terminology like in \cite[Definition 16.4.1]{FJ08}.

A {\it{finite embedding problem over $k$}} is an epimorphism $\alpha : G \rightarrow {\rm{Aut}}(L/k)$, where $G$ is a finite group and $L/k$ a finite separable extension. Say that $\alpha$ is {\it{split}} if there is an embedding $\alpha' : {\rm{Aut}}(L/k) \rightarrow G$ such that $\alpha \circ \alpha' = {\rm{id}}_{{\rm{Aut}}(L/k)}$, and that $\alpha$ is {\it{Galois}} if $L/k$ is Galois. A {\it{solution}} to $\alpha$ is an isomorphism $\beta : {\rm{Aut}}(E/k) \rightarrow G$, where $E/k$ is a finite separable extension such that $L \subseteq E$, that satisfies $\alpha \circ \beta = {\rm{res}}_{L/k}^{E/k}$ \footnote{This implies in particular that the domain of the restriction map ${\rm{res}}_{L/k}^{E/k}$ is the whole group ${\rm{Aut}}(E/k)$ and that this map is surjective.}. Refer to $E$ as the {\it{solution field}} associated with $\beta$. In the case $\alpha$ is Galois, the solution $\beta$ is {\it{Galois}} if the extension $E/k$ is Galois.

Let $\alpha: G \rightarrow {\rm{Aut}}(L/k)$ be a finite embedding problem over $k$ and let $\widehat{L}$ be the Galois closure of $L$ over $k$. If $M$ denotes any field containing $k$ which is linearly disjoint from $\widehat{L}$ over $k$, then the finite embedding problem
$${{\rm{res}}_{L/k}^{LM/M}}^{-1} \circ \alpha : G \rightarrow {\rm{Aut}}(LM/M)$$
over $M$ is denoted by $\alpha_{M}$ \footnote{As seen in \S\ref{ssec:rm}, the restriction map ${\rm{res}}_{L/k}^{LM/M} : {\rm{Aut}}(LM/M) \rightarrow {\rm{Aut}}(L/k)$ is a well-defined isomorphism.}. If $\beta$ is a solution to $\alpha_M$ with solution field denoted by $E$, then
$${\rm{res}}^{E/M}_{LM/M} = \alpha_M \circ \beta= {{\rm{res}}_{L/k}^{LM/M}}^{-1} \circ \alpha \circ \beta.$$
In particular, one has
$\alpha \circ \beta= {\rm{res}}_{L/k}^{LM/M} \circ {\rm{res}}^{E/M}_{LM/M} = {\rm{res}}^{E/M}_{L/k}.$

Given an indeterminate $T$, let $\alpha: G \rightarrow {\rm{Aut}}(L/k(T))$ be a finite embedding problem over $k(T)$. A solution to $\alpha$ is {\it{regular}} if the associated solution field $E$ satisfies $E \cap \overline{k} = L \cap \overline{k}$.

Let $\alpha : G \rightarrow {\rm{Aut}}(L/k)$ be a finite embedding problem over $k$. A {\it{geometric solution}} to $\alpha$ is a regular solution $\beta$ to $\alpha_{k(T)}$. Furthermore, in the case $L/k$ is Galois, we shall say that $\beta$ is a {\it{geometric Galois solution}} to $\alpha$ if $\beta$ is a regular Galois solution to $\alpha_{k(T)}$.

\subsection{Specializations} \label{ssec:ffe}

For more on the following, we refer to \cite[\S1.9]{Deb09} and \cite[\S2.1.4]{DL13}. Let ${\bf{T}}=(T_1, \dots, T_n)$ be an $n$-tuple of algebraically independent indeterminates ($n \geq 1$), $E/k({\bf{T}})$ a finite separable extension, and $\widehat{E}$ the Galois closure of $E$ over $k({\bf{T}})$.

Let $\widehat{B}$ be the integral closure of $k[{\bf{T}}]$ in $\widehat{E}$. For ${\bf{t}}=(t_1, \dots, t_n) \in k^n$, the residue field of $\widehat{B}$ at a maximal ideal $\mathcal{P}$ lying over the ideal $\langle {\bf{T}} - {\bf{t}} \rangle$ of $k[{\bf{T}}]$ generated by $T_1-t_1, \dots, T_n-t_n$ is denoted by $\widehat{E}_{\bf{t}}$ and the extension $\widehat{E}_{\bf{t}}/k$ is called the {\it{specialization}} of $\widehat{E}/k({\bf{T}})$ at ${\bf{t}}$. As $\widehat{E}/k({\bf{T}})$ is Galois, the field $\widehat{E}_{\bf{t}}$ does not depend on $\mathcal{P}$ and the extension $\widehat{E}_{\bf{t}}/k$ is finite and normal. Moreover, for ${\bf{t}}$ outside a Zariski-closed proper subset (depending only on $\widehat{E}/k({\bf{T}})$), the extension $\widehat{E}_{\bf{t}}/k$ is Galois and its Galois group is a subgroup of ${\rm{Gal}}(\widehat{E}/k({\bf{T}}))$, namely the decomposition group of $\widehat{E}/k({\bf{T}})$ at $\mathcal{P}$. If $P({\bf{T}},X) \in k[{\bf{T}}][X]$ is the minimal polynomial of a primitive element of $\widehat{E}$ over $k({\bf{T}})$, assumed to be integral over $k[{\bf{T}}]$, then, for ${\bf{t}} \in k^n$, the field $\widehat{E}_{\bf{t}}$ contains a root $x_{\bf{t}}$ of $P({\bf{t}},X)$. In particular, if $P({\bf{t}},X)$ is irreducible over $k$ and separable, then $\widehat{E}_{\bf{t}} = k(x_{\bf{t}})$ and $[\widehat{E}_{\bf{t}} : k]=[\widehat{E}:k({\bf{T}})]$, the extension $\widehat{E}_{\bf{t}}/k$ is Galois, and $\widehat{E}_{\bf{t}}$ is the splitting field over $k$ of $P({\bf{t}},X)$.

Let $B$ be the integral closure of $k[{\bf{T}}]$ in ${E}$. For ${\bf{t}} \in k^n$, let $\mathcal{P}_1, \dots, \mathcal{P}_s$ be the maximal ideals of $B$ lying over $\langle {\bf{T}} - {\bf{t}} \rangle$. For $i \in \{1, \dots,s\}$, the residue field of $B$ at $\mathcal{P}_i$ is denoted by $E_{{\bf{t}},i}$ and the finite extension ${E}_{{\bf{t}},i}/k$ is called a {\it{specialization}} of ${E}/k({\bf{T}})$ at ${\bf{t}}$. If $\widehat{E}_{\bf{t}}/k$ is Galois, then ${E}_{{\bf{t}},1}/k, \dots, {E}_{{\bf{t}},s}/k$ are separable. Moreover, if $[\widehat{E}_{\bf{t}} : k]=[\widehat{E}:k({\bf{T}})]$, then $s=1$, in which case the field $E_{{\bf{t}},1}$ is simply denoted by $E_{{\bf{t}}}$, one has $[{E}_{\bf{t}} : k]=[{E}:k({\bf{T}})]$, and if ${E}_{{\bf{t}}}/k$ is separable, then $\widehat{E}_{\bf{t}}$ is the Galois closure of $E_{\bf{t}}$ over $k$.

\section{Two general results about finite embedding problems} \label{sec:series}

This section is devoted to two general results about finite embedding problems (Galois or not) that will be used in \S\ref{sec:proof} to prove Theorem \ref{thm:main} and Corollary \ref{coro:spec}; see Propositions \ref{lemma:0} and \ref{prop:sssecI}. For this section, let $k$ be an arbitrary field.

\subsection{Specializing indeterminates} \label{ssec:spe_ind}

Let ${\bf{T}}=(T_1, \dots, T_n)$ be a tuple of algebraically independent indeterminates ($n \geq 1$).

Given a finite Galois embedding problem $\alpha : G \rightarrow {\rm{Gal}}(L/k)$ over $k$, it is classical that if $k$ is Hilbertian and $\alpha_{k({\bf{T}})}$ has a Galois solution whose solution field is denoted by $E$, then $\alpha$ has a Galois solution whose associated solution field is a suitable specialization of $E$. See, e.g., \cite[Lemma 16.4.2]{FJ08} for more details. We now extend this to our non-Galois context:

\begin{proposition} \label{lemma:0}
Let $\alpha:G \rightarrow {\rm{Aut}}(L/k)$ be a finite embedding problem over $k$. Suppose the finite embedding problem $\alpha_{k({\bf{T}})}$ has a solution, whose solution field is denoted by $E$.

\vspace{0.5mm}

\noindent
{\rm{(1)}} Suppose $k$ is Hilbertian. Then, for each ${\bf{t}}$ in a Zariski-dense subset of $k^n$, the embedding problem $\alpha$ has a solution whose associated solution field is $E_{{\bf{t}}}$.

\vspace{0.5mm}

\noindent
{\rm{(2)}} Suppose $k =\kappa(T)$ for some field $\kappa$ (and $T$ an indeterminate) and $E \cap \overline{\kappa} = L \cap \overline{\kappa}$. Then there exists ${\bf{t}} \in k^n$ such that $E_{\bf{t}}$ is the solution field of a regular solution to $\alpha$.
\end{proposition}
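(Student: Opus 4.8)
The plan is to treat both parts by passing to the Galois closure $\widehat E$ of $E$ over $k({\bf T})$ and applying Hilbert's irreducibility theorem to $\widehat E/k({\bf T})$, while keeping track of the intermediate field $E$ --- the genuinely new feature relative to the classical Galois case. Write $\Gamma = {\rm Gal}(\widehat E / k({\bf T}))$, $H = {\rm Gal}(\widehat E / E)$, and $N = N_\Gamma(H)$; by \S\ref{ssec:rm} the map ${\rm res}_{L/k}^{E/k({\bf T})}$ has domain ${\rm Aut}(E/k({\bf T})) \cong N/H$, and by the computation in \S\ref{ssec:fep} one has $\alpha \circ \beta = {\rm res}_{L/k}^{E/k({\bf T})}$. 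Note also that $\widehat L({\bf T}) \subseteq \widehat E$, where $\widehat L$ is the Galois closure of $L$ over $k$.

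For (1): I would first invoke Hilbert's irreducibility theorem for the Hilbertian field $k$ to obtain a Zariski-dense set of ${\bf t} \in k^n$ with $[\widehat E_{\bf t} : k] = [\widehat E : k({\bf T})]$; by \S\ref{ssec:ffe} this guarantees that $E_{\bf t}$ is well defined, that $\widehat E_{\bf t}/k$ is Galois and equals the Galois closure of $E_{\bf t}/k$, and that the specialization induces an isomorphism ${\rm sp}_{\bf t} : \Gamma \to {\rm Gal}(\widehat E_{\bf t}/k)$. The key bookkeeping step is that ${\rm sp}_{\bf t}$ respects the two relevant subfields: since specialization is functorial, ${\rm sp}_{\bf t}(H)$ fixes $E_{\bf t}$ and ${\rm sp}_{\bf t}({\rm Gal}(\widehat E/\widehat L({\bf T})))$ fixes $\widehat L$, and matching degrees (which are preserved) forces ${\rm sp}_{\bf t}(H) = {\rm Gal}(\widehat E_{\bf t}/E_{\bf t})$ and ${\rm sp}_{\bf t}({\rm Gal}(\widehat E/\widehat L({\bf T}))) = {\rm Gal}(\widehat E_{\bf t}/\widehat L)$. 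As normalizers are preserved under group isomorphisms, ${\rm sp}_{\bf t}$ carries $N$ onto the normalizer of ${\rm Gal}(\widehat E_{\bf t}/E_{\bf t})$ and hence induces an isomorphism $\sigma_{\bf t} : {\rm Aut}(E/k({\bf T})) \to {\rm Aut}(E_{\bf t}/k)$ intertwining restriction to $L({\bf T})$ with restriction to $L$. Then $\beta_{\bf t} := \beta \circ \sigma_{\bf t}^{-1}$ satisfies $\alpha \circ \beta_{\bf t} = {\rm res}_{L/k}^{E/k({\bf T})} \circ \sigma_{\bf t}^{-1} = {\rm res}_{L/k}^{E_{\bf t}/k}$, i.e.\ it solves $\alpha$ with solution field $E_{\bf t}$.

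For (2): Since $\kappa(T)$ is Hilbertian, part (1) already produces a Zariski-dense set $\mathcal H \subseteq k^n$ of parameters for which $E_{\bf t}$ solves $\alpha$; it remains to secure regularity, namely $E_{\bf t} \cap \overline\kappa = c$ with $c := L \cap \overline\kappa$. As $c \subseteq E_{\bf t} \cap \overline\kappa$ is automatic from $L \subseteq E_{\bf t}$, the content is that no new constants appear. I would reformulate this geometrically: the hypothesis $E \cap \overline\kappa = c$ says that $c$ is algebraically closed in $E$, equivalently that $E\overline\kappa/\overline\kappa({\bf T},T)$ is a field extension, i.e.\ that the corresponding cover is geometrically irreducible; regularity of $E_{\bf t}$ then amounts to the specialization of this cover at ${\bf T}={\bf t}$ staying irreducible over $\overline\kappa(T)$. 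This is a Hilbert-irreducibility statement over the Hilbertian field $\overline\kappa(T)$, but with the specialization values constrained to $\kappa(T)^n$; the point is that the extra variable $T$ gives enough room to keep the cover geometrically irreducible even though $\kappa$ itself need not be Hilbertian. Using \cite[Proposition 16.11.1]{FJ08}, I would show that the ${\bf t} \in \kappa(T)^n$ preserving this irreducibility, and hence the constant field $c$, form a set meeting $\mathcal H$; any such ${\bf t}$ yields a regular solution.

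I expect the main obstacle to be precisely this constant-field control in (2): ordinary Hilbert irreducibility over $\kappa(T)$ preserves the Galois group of $\widehat E_{\bf t}/\kappa(T)$ but says nothing about whether new constants are introduced, and since $\kappa$ is arbitrary one cannot fall back on Hilbertianity of $\kappa$. The resolution is to separate the geometric irreducibility (over $\overline\kappa$) from the arithmetic and to exploit that the surviving parameter $T$ renders $\overline\kappa(T)$ Hilbertian, with \cite[Proposition 16.11.1]{FJ08} supplying the compatibility that keeps the specialization points rational over $\kappa(T)$ while preserving geometric irreducibility. By comparison, the bookkeeping in (1) --- that specialization respects the non-normal intermediate field $E$ and the restriction to $L$ --- is routine once degrees are matched.
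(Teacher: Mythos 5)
Your treatment of part (1) is essentially the paper's argument: pass to the Galois closure, use Hilbertianity of $k$ to preserve $[\widehat E : k({\bf T})]$, transport the subgroups ${\rm Gal}(\widehat E/E)$ and ${\rm Gal}(\widehat E/\widehat L({\bf T}))$ and their normalizers through the specialization isomorphism, and compose the induced isomorphism on automorphism groups with $\beta$. One caveat: matching the subgroups by degree count is not by itself enough to get the intertwining with ${\rm res}_{L/k}$; you also need the specialization isomorphism to be \emph{pointwise} compatible on constants, i.e., $\psi_{\bf t}(\sigma)(x)=\sigma(x)$ for $x\in L$ (this is \cite[Lemma 16.1.1]{FJ08}, recorded as condition \eqref{eq1.1} in the paper). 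Your appeal to ``functoriality of specialization'' is standing in for exactly this fact, so this is a presentational rather than a substantive issue.

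Part (2) has a genuine gap at the step ``form a set meeting $\mathcal H$.'' The set $\mathcal H$ produced by part (1) is governed by irreducibility over $\kappa(T)$ of the minimal polynomial of $\widehat E$ over $\kappa(T,{\bf T})$, while your regularity set is governed by irreducibility over $\overline\kappa(T)$ of a \emph{different} polynomial; the latter is not a separable Hilbert subset of $\kappa(T)^n$ (it is the trace on $\kappa(T)^n$ of a Hilbert subset of $\overline\kappa(T)^n$), so the standard fact that finitely many Hilbert subsets of a Hilbertian field have nonempty intersection does not apply, and two Zariski-dense subsets of $k^n$ can perfectly well be disjoint. The paper avoids any intersection argument: it \emph{replaces} the arithmetic condition defining $\mathcal H$ by geometric ones. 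Setting $F=\widehat E\cap\overline\kappa$ and $F'=E\cap\overline\kappa$, one takes the minimal polynomials $P(T,{\bf T},X)$ of $\widehat E$ over $F(T,{\bf T})$ and $Q(T,{\bf T},X)$ of $E$ over $F'(T,{\bf T})$, which are irreducible over $\overline\kappa(T,{\bf T})$, and produces a \emph{single} ${\bf t}\in\kappa(T)^n$ keeping both irreducible and separable over $\overline\kappa(T)$. This one condition already implies all three hypotheses needed to run the common specialization step: $[\widehat E_{\bf t}:k]=[\widehat E:k({\bf T})]$ and $\widehat E_{\bf t}/k$ Galois (from $P$, via $[F:\kappa]$-bookkeeping), and $E_{\bf t}\cap\overline\kappa=F'$ (from $Q$). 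So no point of $\mathcal H$ is ever needed; the geometric conditions imply membership in the relevant part of it. Finally, the constrained specialization itself (values in $\kappa(T)^n$, irreducibility over $\overline\kappa(T)$) is not covered by your single citation: the paper uses \cite[Proposition 13.2.1]{FJ08} together with an induction on $n$ when $\kappa$ is infinite, and reserves \cite[Theorem 13.4.2 and Proposition 16.11.1]{FJ08} for finite $\kappa$; relying on Proposition 16.11.1 alone leaves the infinite case unaddressed.
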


\begin{proof}
We break the proof into three parts. Let $\widehat{E}$ denote the Galois closure of $E$ over $k({\bf{T}})$.

Firstly, let ${\bf{t}} \in k^n$. We claim that if $[\widehat{E}_{\bf{t}} : k]=[\widehat{E}:k({\bf{T}})]$ and $\widehat{E}_{\bf{t}}/k$ is Galois, then the field $E_{{\bf{t}}}$, which is well-defined (see \S\ref{ssec:ffe}), is the solution field of a solution to $\alpha$. From our assumptions on ${\bf{t}}$, there exists an isomorphism
$$\psi_{\bf{t}}:{\rm{Gal}}(\widehat{E}_{\bf{t}}/k) \rightarrow {\rm{Gal}}(\widehat{E}/k({\bf{T}}))$$
such that the following two conditions hold:
\begin{equation} \label{eq1}
\psi_{\bf{t}}({\rm{Gal}}(\widehat{E}_{\bf{t}} / E_{\bf{t}}))={\rm{Gal}}(\widehat{E}/E),
\end{equation}
\begin{equation} \label{eq1.1}
\psi_{\bf{t}}(\sigma)(x) = \sigma(x) \, \, \, {\rm{for}} \, \, \, {\rm{every}} \, \, \, \sigma \in {\rm{Gal}}(\widehat{E}_{\bf{t}}/k) \, \, \, {\rm{and}} \, \, \, {\rm{every}} \, \, \, x \in L.
\end{equation}
See \cite[Lemma 16.1.1]{FJ08} and \cite[\S1.9]{Deb09} for more details. Denote the normalizer of ${\rm{Gal}}(\widehat{E}/E)$ in ${\rm{Gal}}(\widehat{E}/k({\bf{T}}))$ by $H$ and that of ${\rm{Gal}}(\widehat{E}_{\bf{t}}/E_{\bf{t}})$ in ${\rm{Gal}}(\widehat{E}_{\bf{t}}/k)$ by $H_{\bf{t}}$. From \eqref{eq1},
\begin{equation} \label{eq1.5}
\psi_{\bf{t}}(H_{\bf{t}}) = H.
\end{equation}
As the domains of the maps ${\rm{res}}_{E/k({\bf{T}})}^{\widehat{E}/k({\bf{T}})}$, ${\rm{res}}_{L({\bf{T}})/k({\bf{T}})}^{E/k({\bf{T}})}$, and ${\rm{res}}_{L/k}^{L({\bf{T}})/k({\bf{T}})}$ are $H$, ${\rm{Aut}}(E/k({\bf{T}}))$, and ${\rm{Aut}}(L({\bf{T}})/k({\bf{T}}))$, respectively, $H$ is contained in the domain of ${\rm{res}}_{L/k}^{\widehat{E}/k({\bf{T}})}$. Combine this, \eqref{eq1.1}, and \eqref{eq1.5} to get that $H_{\bf{t}}$ is contained in the domain of ${\rm{res}}_{L/k}^{\widehat{E}_{\bf{t}}/k}$. Hence, as ${\rm{res}}^{\widehat{E}_{\bf{t}}/k}_{E_{\bf{t}}/k}$ is surjective, the domain of the map ${\rm{res}}^{E_{\bf{t}}/k}_{L/k}$, which is well-defined as $L \subseteq E_{\bf{t}}$, is the whole automorphism group ${\rm{Aut}}(E_{\bf{t}}/k)$ \footnote{We use here a special case of the following easy claim: if $L/k$, $F/M$, and $N/K$ are three finite separable extensions with $k \subseteq M \subseteq K$ and $L \subseteq F \subseteq N$, then one has ${\rm{res}}^{N/K}_{F/M}(H^{N/K}_{L/k} \cap H^{N/K}_{F/M}) \subseteq H_{L/k}^{F/M}$, where $H^{N/K}_{F/M}$, $H^{N/K}_{L/k}$, and $H_{L/k}^{F/M}$ denote the domains of the maps ${\rm{res}}^{N/K}_{F/M}$, ${\rm{res}}^{N/K}_{L/k}$, and ${\rm{res}}^{F/M}_{L/k}$, respectively.}. Moreover, the domain of the map
${\rm{res}}^{{E}/k({\bf{T}})}_{L/k}$ is ${\rm{Aut}}({E}/k({\bf{T}}))$ (since those of ${\rm{res}}^{{E}/k({\bf{T}})}_{L({\bf{T}})/k({\bf{T}})}$ and ${\rm{res}}^{L({\bf{T}})/k({\bf{T}})}_{L/k}$ are ${\rm{Aut}}(E/k({\bf{T}}))$ and ${\rm{Aut}}(L({\bf{T}})/k({\bf{T}}))$, respectively). Now, use \eqref{eq1} and the surjectivity of ${\rm{res}}^{\widehat{E}_{\bf{t}}/k}_{E_{\bf{t}}/k}$ and ${\rm{res}}^{\widehat{E}/k({\bf{T}})}_{{E}/k({\bf{T}})}$ to get that the isomorphism ${\psi_{\bf{t}}}|_{{H_{\bf{t}}}}$ induces an isomorphism $h_{\bf{t}} : {\rm{Aut}}(E_{\bf{t}}/k) \rightarrow {\rm{Aut}}(E/k({\bf{T}}))$ that satisfies
\begin{equation} \label{equ}
h_{\bf{t}} \circ {\rm{res}}^{\widehat{E}_{\bf{t}}/k}_{{E}_{\bf{t}}/k} = {\rm{res}}^{\widehat{E}/k({\bf{T}})}_{{E}/k({\bf{T}})} \circ {\psi_{\bf{t}}}{|_{H_{\bf{t}}}}.
\end{equation}
\vspace{-4mm}
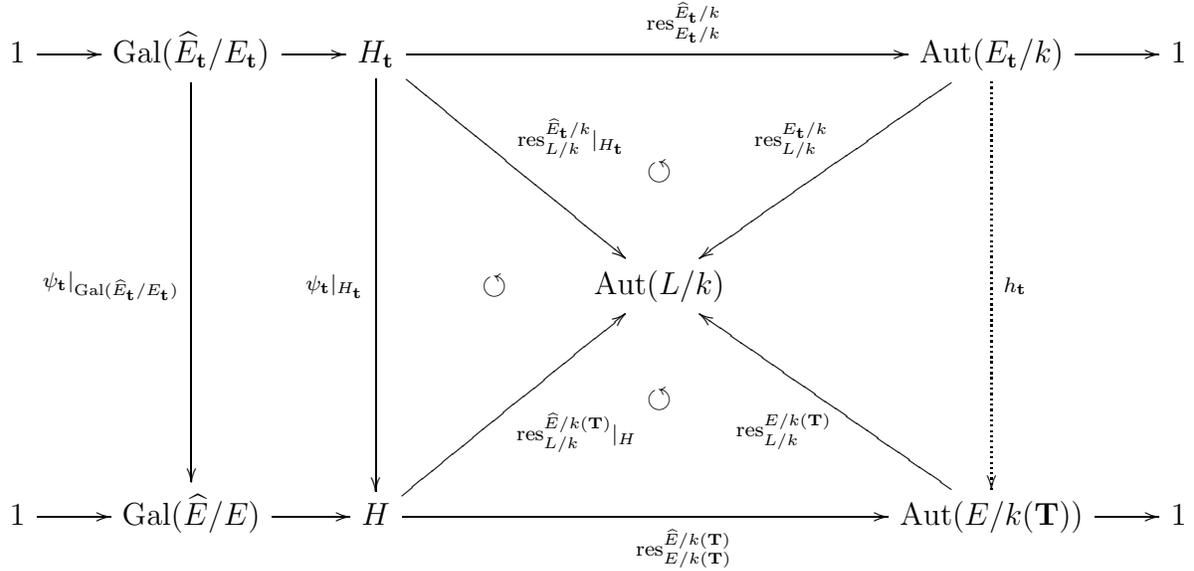
\begin{figure}[h!]
\[ \xymatrix{
1 \ar[r] & {\rm{Gal}}(\widehat{E}_{\bf{t}}/E_{\bf{t}}) \ar[r] \ar[dddd]_{\psi_{\bf{t}}|_{{\rm{Gal}}(\widehat{E}_{\bf{t}}/E_{\bf{t}})}} & H_{\bf{t}} \ar[rrdd]^{{{{{\rm{res}}^{\widehat{E}_{\bf{t}}/k}_{L/k}}}}{| _{H_{\bf{t}}}}} \ar[dddd]_{{\psi_{\bf{t}}}{|_{H_{\bf{t}}}}} \ar[rrrr]^{{\rm{res}}^{\widehat{E}_{\bf{t}}/k}_{{E}_{\bf{t}}/k}} & & & & {\rm{Aut}}(E_{\bf{t}}/k) \ar[ldld]_{{\rm{res}}^{{E}_{\bf{t}}/k}_{L/k}} \ar@{.>}[dddd]^{h_{\bf{t}}} \ar[r] & 1 \\
& & & & \circlearrowleft & & & \\
& & & \circlearrowleft & {\rm{Aut}}(L/k) & & & \\
& & & & \circlearrowleft & & & \\
1 \ar[r] & {\rm{Gal}}(\widehat{E}/E) \ar[r]& H \ar[uurr]_{{{{{\rm{res}}^{\widehat{E}/k({\bf{T}})}_{L/k}}}}{|_{H}}} \ar[rrrr]_{{\rm{res}}^{\widehat{E}/k({\bf{T}})}_{{E}/k({\bf{T}})}} & & & & {\rm{Aut}}(E/k({\bf{T}})) \ar[lulu]^{{\rm{res}}^{{E}/k({\bf{T}})}_{L/k}} \ar[r] & 1
}
\]
\caption{Group homomorphisms} \label{Figu}
\end{figure}

\noindent
By \eqref{equ}, the surjectivity of ${\rm{res}}^{\widehat{E}_{\bf{t}}/k}_{E_{\bf{t}}/k}$, and the commutativity of the three triangles in Figure \ref{Figu} (denoted by $\circlearrowleft$), one has
\begin{equation} \label{eq1.6}
{\rm{res}}^{{E}/k({\bf{T}})}_{L/k} \circ h_{\bf{t}} = {\rm{res}}^{E_{\bf{t}}/k}_{L/k}.
\end{equation}
Finally, let $\beta : {\rm{Aut}}(E/k({\bf{T}})) \rightarrow G$ be a solution to $\alpha_{k({\bf{T}})}$ whose solution field is $E$. Consider the isomorphism
$\beta \circ h_{\bf{t}} : {\rm{Aut}}(E_{\bf{t}}/k) \rightarrow G.$
By \eqref{eq1.6} and as
$\alpha \circ \beta= {{\rm{res}}_{L/k}^{E/k({\bf{T}})}},$ one has
$$\alpha \circ \beta \circ h_{\bf{t}} = {\rm{res}}^{E_{\bf{t}}/k}_{L/k},$$
as needed.

Secondly, we prove (1). Let $P({\bf{T}},X) \in k[{\bf{T}}][X]$ be the minimal polynomial of a primitive element of $\widehat{E}$ over $k({\bf{T}})$, assumed to be integral over $k[{\bf{T}}]$. As $k$ has been assumed to be Hilbertian, for each ${\bf{t}}$ in a Zariski-dense subset of $k^n$, the polynomial $P({\bf{t}},X)$ is irreducible over $k$ and separable. In particular, one has $[\widehat{E}_{\bf{t}} : k]=[\widehat{E}:k({\bf{T}})]$ and the extension ${\widehat{E}}_{\bf{t}}/k$ is Galois (see \S\ref{ssec:ffe}). Then (1) follows from the first part of the proof.

Thirdly, we prove (2). By the first part of the proof, it suffices to find ${\bf{t}} \in k^n$ such that $[\widehat{E}_{\bf{t}} : k]=[\widehat{E}:k({\bf{T}})]$, the extension $\widehat{E}_{\bf{t}}/k$ is Galois, and $E_{\bf{t}} \cap \overline{\kappa} = L \cap \overline{\kappa}$. Note that the existence of ${\bf{t}}$ such that the first two conditions hold follows immediately from the second part of the proof and the fact that $\kappa(T)$ is Hilbertian. To get the extra conclusion that at least one solution is regular, we need to specialize $T_1, \dots, T_n$ suitably. Set $F= \widehat{E} \cap \overline{\kappa}$ and let $P(T, {\bf{T}}, X) \in F[T, {\bf{T}}, X]$ be the minimal polynomial of a primitive element of $\widehat{E}$ over $F(T, {\bf{T}})$, assumed to be integral over $F[T, {\bf{T}}]$. Moreover, set $F'=E \cap \overline{\kappa}$ and let $Q(T, {\bf{T}}, X) \in F'[T, {\bf{T}}, X]$ be the minimal polynomial of a primitive element of ${E}$ over $F'(T, {\bf{T}})$, assumed to be integral over $F'[T, {\bf{T}}]$. Clearly, $P(T, {\bf{T}}, X)$ and $Q(T, {\bf{T}}, X)$ are irreducible over $\overline{\kappa}(T, {\bf{T}})$. Then apply either \cite[Proposition 13.2.1]{FJ08} and an induction on $n$ if $\kappa$ is infinite or \cite[Theorem 13.4.2 and Proposition 16.11.1]{FJ08} if $\kappa$ is finite to get the existence of ${\bf{t}} = (t_1, \dots, t_n) \in k^n$ such that $P(T, {\bf{t}}, X) \in F(T)[X]$ and $Q(T, {\bf{t}}, X) \in F'(T)[X]$ are irreducible over $\overline{\kappa}(T)$ and separable. Let $M$ be the field generated over $F(T)$ by one root of $P(T, {\bf{t}}, X)$. As this polynomial is irreducible over $F(T)$, one has $[M:\kappa(T)] = [\widehat{E}:\kappa(T, {\bf{T}})]$ and, by \S\ref{ssec:ffe}, the fields $M$ and $\widehat{E}_{\bf{t}}$ coincide (in particular, $\widehat{E}_{\bf{t}}/k$ is Galois). Then, by \S\ref{ssec:ffe}, the field $E_{\bf{t}}$ is well-defined. Moreover, $E_{\bf{t}}$ contains a root $x$ of $Q(T, {\bf{t}}, X)$. As this polynomial is irreducible over $F'(T)$, one has $E_{\bf{t}} = F'(T, x)$. Then combine this equality and the irreducibility of $Q(T, {\bf{t}}, X)$ over $\overline{\kappa}(T)$ to get $E_{\bf{t}} \cap \overline{\kappa} = F'= E \cap \overline{\kappa}$, thus ending the proof.
\end{proof}

\subsection{On the existence of geometric solutions after base change}

The aim of the next proposition is to provide a geometric Galois solution to any given finite Galois embedding problem over any given field, up to making a regular finitely generated base change.

Recall that a field extension $k_0/k$ is {\it{regular}} if $k_0/k$ is separable (in the sense of non-necessarily algebraic extensions; see, e.g., \cite[\S2.6]{FJ08}) and the equality $k_0 \cap \overline{k}=k$ holds.

\begin{proposition} \label{prop:sssecI}
Let $\alpha: G \rightarrow {\rm{Gal}}(L/k)$ be a finite Galois embedding problem over $k$. Then there exists a regular finitely generated extension $k_0/k$ such that the finite Galois embedding problem $\alpha_{k_0}$ has a geometric Galois solution\footnote{Since $k_0/k$ is regular, the fields $L$ and $k_0$ are linearly disjoint over $k$, thus making $\alpha_{k_0}$ well-defined.}.
\end{proposition}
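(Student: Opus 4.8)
The plan is to solve the embedding problem geometrically over one large regular extension of $k$ and then to descend the solution to a finitely generated subextension. Enlarging $k$ serves to reach a field over whose rational function field arbitrary finite embedding problems---in particular non-split ones---are known to be regularly solvable; the natural targets are ample fields, and, to cover the non-split case, PAC fields, whose absolute Galois groups are projective.

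First I would use Proposition~\ref{prop:pac} to embed $k$ into a regular PAC extension $\Omega/k$. As $\Omega/k$ is regular, $L$ and $\Omega$ are linearly disjoint over $k$, so $\alpha_\Omega\colon G\to{\rm{Gal}}(L\Omega/\Omega)$ and $(\alpha_\Omega)_{\Omega(T)}\colon G\to{\rm{Gal}}(L\Omega(T)/\Omega(T))$ are well-defined finite Galois embedding problems. Since $\Omega$ is PAC it is ample, so by \cite{Pop96,HJ98b} every finite \emph{split} embedding problem over $\Omega(T)$ has a regular solution; and since $\Omega$ is PAC, ${\rm{Gal}}(\overline\Omega/\Omega)$ is projective, which---through the field-crossing argument over the free geometric Galois group ${\rm{Gal}}(\overline{\Omega(T)}/\overline\Omega(T))$---upgrades this to regular solvability of \emph{every} finite embedding problem over $\Omega(T)$, split or not. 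Applied to $(\alpha_\Omega)_{\Omega(T)}$, this produces a regular Galois solution $\beta_\Omega\colon{\rm{Gal}}(E_\Omega/\Omega(T))\to G$, i.e.\ a geometric Galois solution to $\alpha_\Omega$, whose solution field $E_\Omega\supseteq L\Omega(T)$ satisfies $E_\Omega\cap\overline\Omega=L\Omega(T)\cap\overline\Omega$.

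Next I would descend to a finitely generated field. Writing $E_\Omega=\Omega(T)(x)$ with $x$ a root of the minimal polynomial $P(T,X)\in\Omega(T)[X]$ of a primitive element, let $k_0$ be the extension of $k$ generated inside $\Omega$ by the finitely many coefficients of $P$ together with finitely many further elements witnessing the inclusion $L\Omega(T)\subseteq E_\Omega$, the Galois action realizing $\beta_\Omega$, and the identity $(\alpha_\Omega)_{\Omega(T)}\circ\beta_\Omega={\rm{res}}^{E_\Omega/\Omega(T)}_{L\Omega(T)/\Omega(T)}$. Then $k_0/k$ is finitely generated, and it is regular because it is intermediate in the regular extension $\Omega/k$ (so $k_0\cap\overline k\subseteq\Omega\cap\overline k=k$, and separability is inherited). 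Over $k_0(T)$ the polynomial $P$ cuts out a finite separable extension $E_0/k_0(T)$ with $E_0\cdot\Omega=E_\Omega$ and $Lk_0(T)\subseteq E_0$, and since every defining identity already holds over $k_0$, the isomorphism $\beta_\Omega$ restricts to an isomorphism $\beta_0\colon{\rm{Gal}}(E_0/k_0(T))\to G$ that is a Galois solution to $(\alpha_{k_0})_{k_0(T)}$.

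It then remains to verify the regularity $E_0\cap\overline{k_0}=L k_0(T)\cap\overline{k_0}$, which descends cleanly from the $\Omega$-level: the primitive polynomial is irreducible over $\overline\Omega(T)$, and irreducibility over $\overline\Omega(T)$ of a polynomial with coefficients in $\overline{k_0}(T)$ forces irreducibility over the smaller field $\overline{k_0}(T)$, so the algebraic part of $E_0$ over $k_0$ is no larger than $Lk_0=Lk_0(T)\cap\overline{k_0}$. This makes $\beta_0$ a geometric Galois solution to $\alpha_{k_0}$. I expect the main obstacle to be the second step, the regular solvability of possibly non-split embedding problems over the PAC function field $\Omega(T)$: this is precisely where projectivity of ${\rm{Gal}}(\overline\Omega/\Omega)$---hence the PAC, not merely ample, hypothesis---is indispensable, and where I would lean on Proposition~\ref{prop:pac} and \cite{Pop96,HJ98b} rather than build the solution by hand.
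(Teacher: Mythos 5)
Your overall strategy is exactly the paper's own: pass to a PAC field $\Omega$ regular over $k$, use PAC-ness to solve the constant embedding problem $\alpha_\Omega$ geometrically, and descend to a finitely generated subextension of $\Omega/k$ by spreading out the finite amount of data defining the solution (this descent step is precisely the paper's Lemma~\ref{prop:desc}, which you in effect reprove). However, two of your justifications are false as stated. A small point first: Proposition~\ref{prop:pac} does not provide the regular PAC extension $\Omega/k$; that existence statement is \cite[Proposition 13.4.6]{FJ08}. More seriously, your claim that projectivity of ${\rm{Gal}}(\overline{\Omega}/\Omega)$ ``upgrades'' Pop's theorem to regular solvability of \emph{every} finite embedding problem over $\Omega(T)$, split or not, is wrong: projectivity of the absolute Galois group of $\Omega$ does not pass to $\Omega(T)$, and the Remark following Proposition~\ref{prop:pac} in the paper notes that if $\Omega$ is PAC of characteristic zero and not algebraically closed, then the absolute Galois group of $\Omega(T)$ is \emph{not} projective, so some finite embedding problems over $\Omega(T)$ admit no solution at all. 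What is true, and all you need, is the \emph{constant} case---solvability of problems of the form $(\alpha_\Omega)_{\Omega(T)}$---which is exactly the content of Proposition~\ref{prop:pac}; since you also invoke that proposition, this step of your proof survives, but on the strength of the citation rather than of your argument.

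Second, your regularity descent fails whenever $L \neq k$. You take $P(T,X)$ to be the minimal polynomial over $\Omega(T)$ of a primitive element $x$ of $E_\Omega$ and assert that it is irreducible over $\overline{\Omega}(T)$. But $E_\Omega \cap \overline{\Omega} = L\Omega$, so $E_\Omega$ contains the proper algebraic extension $L\Omega$ of $\Omega$, and hence $P$ factors over $\overline{\Omega}(T)$ into $[L:k]$ irreducible factors of degree $|G|/[L:k]$; it is irreducible there only in the degenerate case $L=k$. The repair is routine but must be made: apply the irreducibility argument instead to the minimal polynomial of $x$ over $L\Omega(T)$, after enlarging $k_0$ so that its coefficients lie in $Lk_0(T)$. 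That polynomial \emph{is} irreducible over $\overline{\Omega}(T)$, because $E_\Omega/L\Omega(T)$ is Galois and $E_\Omega \cap \overline{\Omega}(T) = L\Omega(T)$, and descending its irreducibility to the smaller field $\overline{k_0}(T)$ then yields $E_0 \cap \overline{k_0} = Lk_0$, as required. (The paper's Lemma~\ref{prop:desc} reaches the same conclusion by a linear-disjointness argument instead.) With these two corrections your proof is complete and coincides with the paper's.
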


The proof requires the next two results, that are more or less known to experts. The first one, which is Proposition \ref{prop:sssecI} for PAC fields, shows that one can take $k_0=k$ in this case.

Recall that $k$ is said to be {\it{Pseudo Algebraically Closed}} (PAC) if every non-empty geometrically irreducible $k$-variety has a Zariski-dense set of $k$-rational points. See, e.g., \cite{FJ08} for more on PAC fields.

\begin{proposition} \label{prop:pac}
Assume $k$ is PAC. Then every finite Galois embedding problem over $k$ has a geometric Galois solution.
\end{proposition}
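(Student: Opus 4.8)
The plan is to produce the geometric Galois solution by exploiting two features that a PAC field $k$ enjoys simultaneously: it is ample, and its absolute Galois group $\mathrm{G}_k$ is projective. Writing $A = \mathrm{Gal}(L/k)$ and $N = \ker\alpha$, what must be constructed is a finite Galois extension $E/k(T)$ with $L(T) \subseteq E$ and $E \cap \overline{k} = L$, together with an isomorphism $\mathrm{Gal}(E/k(T)) \to G$ realizing $\alpha_{k(T)}$; equivalently, a continuous epimorphism $\mathrm{Gal}(k(T)) \to G$ inducing $\alpha$ on $A$ and mapping the geometric part $\mathrm{Gal}(\overline{k}(T)\cdot k(T)_{\mathrm{sep}}/\overline{k}(T))$ onto $N$, the latter condition being exactly regularity.

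The first, and easy, case is that in which $\alpha$ is split. Because the restriction map $\mathrm{res}^{L(T)/k(T)}_{L/k}$ is an isomorphism, $\alpha_{k(T)}$ is a split finite embedding problem over $k(T)$ precisely when $\alpha$ is. Since $k$ is ample, the theorem of Pop and of Haran--Jarden on split embedding problems over $k(T)$ supplies a regular solution, which is the desired geometric Galois solution. Thus the whole difficulty lies in the \emph{non-split} case, and this is where I would invoke projectivity rather than mere ampleness.

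For a general $\alpha$, I would first use that $\mathrm{G}_k$ is projective to obtain a weak solution, i.e.\ a continuous homomorphism $\mu : \mathrm{G}_k \to G$ with $\alpha \circ \mu$ equal to the quotient map $\mathrm{G}_k \to A$. The goal is to upgrade this to a proper, \emph{regular} solution over $k(T)$. The mechanism I envisage is a reduction, using projectivity, of the arbitrary embedding problem $\alpha_{k(T)}$ to split ones: one attaches to $\alpha$ (via $N$, $A$, and the cocycle describing the extension $1 \to N \to G \to A \to 1$) an auxiliary split embedding problem over $k(T)$, solves it regularly by ampleness as above, and then reads off from that solution --- using the weak solution $\mu$ to trivialise the relevant obstruction --- a regular solution of $\alpha_{k(T)}$ itself whose group is all of $G$. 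Throughout, one keeps the constant fields under control so that no new algebraic constants are introduced and the final $E$ satisfies $E \cap \overline{k} = L$.

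I expect the main obstacle to be exactly this passage from the split to the non-split case: it is one thing to realise the split extension $N \rtimes A$ regularly over $k(T)$, and quite another to realise a possibly non-split $G$ with the prescribed restriction to $\mathrm{Gal}(L(T)/k(T))$. Projectivity of $\mathrm{G}_k$ is precisely what kills the obstruction to lifting, but combining it with the geometric split-solvability while simultaneously guaranteeing (i) surjectivity onto $G$, (ii) compatibility with the restriction map, and (iii) regularity of $E$ requires a careful group-theoretic argument rather than a single citation. That bookkeeping --- ensuring the dominating split problem can be chosen so that its regular solution descends to $\alpha_{k(T)}$ without enlarging $L \cap \overline{k}$ --- is the technical heart of the proof.
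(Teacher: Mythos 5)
Your overall strategy --- weak solution from projectivity, a dominating split embedding problem solved regularly via ampleness, then descent --- is exactly the strategy of the paper's proof (Appendix A.1). The problem is that your write-up stops precisely where the proof begins: the ``careful group-theoretic argument'' you defer \emph{is} the content of the proof, and nothing in your text pins it down. Concretely, the missing construction is this. Let $\gamma : \mathrm{Gal}(k^{\mathrm{sep}}/k) \to G$ be the weak solution given by projectivity, let $L'$ be the fixed field of $\ker(\gamma)$, and let $\gamma' : \mathrm{Gal}(L'/k) \to G$ be the induced homomorphism, so that $\alpha\circ\gamma'=\mathrm{res}^{L'/k}_{L/k}$. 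The dominating split problem is \emph{not} obtained from ``the cocycle describing $1 \to N \to G \to A \to 1$'' (no cocycle is needed, and for non-abelian $N$ the extension class is not encoded by one in any way you could use here); it is the fiber product $G' = \{(g,\sigma) \in G \times \mathrm{Gal}(L'/k) \, : \, \alpha(g) = \mathrm{res}^{L'/k}_{L/k}(\sigma)\}$ with projection $\alpha'$ onto the second factor. This is a \emph{constant} split Galois embedding problem over $k$ (not one over $k(T)$, as you suggest): it splits precisely because $\sigma \mapsto (\gamma'(\sigma),\sigma)$ is a section of $\alpha'$, which is where the weak solution actually enters. Pop's Main Theorem A, applicable since PAC fields are ample, then gives a finite Galois extension $E/k(T)$ with $L'(T)\subseteq E$, $E\cap\overline{k}=L'$, and an isomorphism $\beta' : \mathrm{Gal}(E/k(T)) \to G'$ compatible with restriction to $L'$.

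The descent and constant-field bookkeeping you label ``the technical heart'' must then be carried out, and while short it is not automatic: with $\beta : G' \to G$ the first projection, one takes $E'$ to be the fixed field of $\ker(\beta\circ\beta')$ in $E$, and the whole argument rests on the single computation $\ker(\beta) = \{1_G\} \times \mathrm{Gal}(L'/L)$. This yields $L(T) \subseteq E'$, yields the compatibility $\alpha_{k(T)}\circ\epsilon = \mathrm{res}^{E'/k(T)}_{L(T)/k(T)}$ for the isomorphism $\epsilon : \mathrm{Gal}(E'/k(T)) \to G$ induced by $\beta\circ\beta'$, and yields regularity via $E' \cap \overline{k} \subseteq E'\cap E\cap\overline{k} = E' \cap L' \subseteq L'^{\mathrm{res}^{E/k(T)}_{L'/k}(\ker(\beta\circ\beta'))} = L'^{\mathrm{Gal}(L'/L)} = L$. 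None of steps (i)--(iii) that you flag as the obstacles (surjectivity onto $G$, compatibility with restriction, regularity of $E'$) is addressed by your sketch; all three follow from the kernel identity above once the fiber product is set up correctly. As written, your proposal is a correct plan rather than a proof; filling it in along the lines you indicate leads exactly to the paper's argument, with the caveat that your preliminary split case is subsumed and your cocycle-based description of the reduction should be replaced by the fiber-product construction.
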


\begin{proof}[Comments on proof]
It is a classical and deep result in field arithmetic that every finite Galois embedding problem over an arbitrary PAC Hilbertian field $k$ has a Galois solution; see \cite[Theorem A]{FV92}, \cite{Pop96}, and \cite[Theorem 5.10.3]{Jar11}. To our knowledge, the natural strengthening we consider in Proposition \ref{prop:pac} does not appear in the literature. Recall that the classical proof of the former consists in using the {\it{projectivity}} of the absolute Galois group of $k$ to reduce to solving some finite Galois split embedding problem over $k$ (see, e.g., \cite[Theorem 11.6.2 and Proposition 22.5.9]{FJ08} for more details), which can then be done by making use of \cite[Main Theorem A]{Pop96} and the Galois analogue of Proposition \ref{lemma:0}(1). We give in Appendix \ref{app} a similar and self-contained argument in our geometric context, with the necessary adjustments due to the higher generality.
\end{proof}

\begin{remark}
It is not true in general that if the field $k$ is PAC and $T$ denotes an indeterminate, then every finite Galois embedding problem over $k(T)$ (non-necessarily ``constant") has a (regular) Galois solution. For example, if $k$ is of characteristic zero and not algebraically closed, then the absolute Galois group of $k(T)$ is not projective, cf. \cite[Chapter II, Proposition 11]{Ser97}\footnote{On the other hand, if the field $k$ is separably closed, then the absolute Galois group of $k(T)$ is projective; see, e.g., \cite[Proposition 9.4.6]{Jar11}. In particular, by \cite[Proposition 22.5.9]{FJ08} and as Conjecture \ref{conj:DD1} holds over separably closed fields, all finite Galois embedding problems over $k(T)$ have regular Galois solutions.}.
\end{remark}

The second statement we need shows that if one can provide a geometric Galois solution to a given finite Galois embedding problem after some linearly disjoint base change, then one can further require such a base change to be finitely generated:

\begin{lemma} \label{prop:desc}
Let $\alpha: G \rightarrow {\rm{Gal}}(L/k)$ be a finite Galois embedding problem over $k$. Suppose there exists a field extension $M/k$ such that $M$ and $L$ are linearly disjoint over $k$ and such that $\alpha_M$ has a geometric Galois solution. Then there exists a finitely generated subextension $k_0/k$ of $M/k$ such that $\alpha_{k_0}$ has a geometric Galois solution\footnote{For every intermediate field $k \subseteq k_0 \subseteq M$, the fields $k_0$ and $L$ are linearly disjoint over $k$, thus making $\alpha_{k_0}$ well-defined.}.
\end{lemma}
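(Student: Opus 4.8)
The plan is to perform a standard descent argument, exploiting the fact that a geometric Galois solution to $\alpha_M$ is a finite object defined by finitely many algebraic relations, each of which involves only finitely many elements of $M$. First I would unwind the definition of the geometric Galois solution to $\alpha_M$: by \S\ref{ssec:fep} this is a regular Galois solution $\beta$ to $(\alpha_M)_{M(T)}$, i.e.\ an isomorphism $\beta : {\rm{Gal}}(\mathcal{E}/M(T)) \rightarrow G$ where $\mathcal{E}/M(T)$ is a finite Galois extension containing $LM(T)$, with $\mathcal{E} \cap \overline{M} = LM \cap \overline{M}$, and such that $\alpha_M \circ \beta = {\rm{res}}^{\mathcal{E}/M(T)}_{LM/M}$ (compatibly, after the canonical identifications of \S\ref{ssec:rm}, with $\alpha$ itself). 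I would fix a primitive element $y$ for $\mathcal{E}/M(T)$, integral over $M[T]$, with minimal polynomial $R(T,X) \in M[T][X]$.

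The heart of the argument is that everything is cut out over a finitely generated base. I would let $k_0$ be the subfield of $M$ generated over $k$ by the (finitely many) coefficients of $R(T,X)$, together with enough elements of $M$ to witness the following finite list of facts: that $R$ is separable and irreducible over $k_0(T)$ with splitting field $\mathcal{E}_0 := k_0(T)(y,\ldots)$ inside $\mathcal{E}$; that $\mathcal{E}_0$ contains $Lk_0(T)$ (one adjoins the coordinates expressing a primitive element of $L$ as a polynomial in the roots of $R$); that the action of each element of ${\rm{Gal}}(\mathcal{E}/M(T)) \cong G$ permutes the roots of $R$ by formulas with coefficients in $k_0$; and that the regularity witness $\mathcal{E} \cap \overline{M} = LM \cap \overline{M}$ already holds at the level of $k_0$, i.e.\ $\mathcal{E}_0 \cap \overline{k_0} = Lk_0 \cap \overline{k_0}$. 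Each of these is either a polynomial identity or the (non)vanishing of finitely many algebraic expressions in elements of $M$, so only finitely many elements of $M$ are needed; adjoining them all to $k$ yields a finitely generated $k_0$ with $k \subseteq k_0 \subseteq M$. Since $k_0 \subseteq M$ and $M$ is linearly disjoint from $L$ over $k$, the intermediate field $k_0$ is linearly disjoint from $L$ over $k$ as well (as noted in the footnote), so $\alpha_{k_0}$ is well-defined, and by construction $\mathcal{E}_0/k_0(T)$ is a Galois extension that realizes a geometric Galois solution to $\alpha_{k_0}$.

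The main obstacle I anticipate is bookkeeping the regularity condition $\mathcal{E}_0 \cap \overline{k_0} = Lk_0 \cap \overline{k_0}$: descending an \emph{equality} of algebraic-closure intersections is more delicate than descending irreducibility or a field inclusion, since one must ensure that no new algebraic-over-$k_0$ elements sneak into $\mathcal{E}_0$ beyond those forced by $Lk_0$. I would handle this by descending the two inclusions separately—one of them ($Lk_0 \cap \overline{k_0} \subseteq \mathcal{E}_0$) being automatic, and the reverse inclusion following from the fact that geometric irreducibility of the relevant polynomial over $\overline{M}(T)$, which is what encodes $\mathcal{E} \cap \overline{M} = LM \cap \overline{M}$, is itself a finitely-witnessed condition descending to $\overline{k_0}(T)$ after enlarging $k_0$ by finitely many further elements of $M$. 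A clean alternative, which I would prefer if available, is to simply choose $k_0$ large enough that the \emph{constant field} $\mathcal{E} \cap \overline{M}$ is already generated over $k$ by elements lying in a finite extension witnessed inside $k_0$; this reduces the regularity bookkeeping to a finite-generation statement about the constant field extension, which is again finitely generated over $k$ and hence contained in a suitable $k_0$.
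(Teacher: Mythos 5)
Your proposal is correct and follows essentially the same route as the paper's proof: fix a primitive element of $E/M(T)$, record its minimal polynomial together with the finitely many polynomials expressing its Galois conjugates and a primitive element of $L$, adjoin their coefficients to $k$ to obtain a finitely generated $k_0$, and descend the solution and the compatibility with $\alpha$. Your treatment of regularity is also sound, since the required implication goes in the easy direction ($\overline{k_0}\subseteq\overline{M}$, so irreducibility over $\overline{M}(T)$ implies irreducibility over $\overline{k_0}(T)$); the paper phrases this step instead as $k_0(T,x)\cap\overline{k_0}\subseteq E\cap\overline{M}=LM$ followed by a linear-disjointness argument over $Lk_0(T)$, but the underlying mechanism is the same.
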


\begin{proof}[Comments on proof]
For split embedding problems, the lemma may be proved by following the lines of Part B of the proof of \cite[Lemma 5.9.1]{Jar11}. We offer in Appendix \ref{app} a similar and self-contained argument which applies to any finite Galois embedding problem.
\end{proof}

\begin{proof}[Proof of Proposition \ref{prop:sssecI}]
Let $M$ be an arbitrary PAC field which is regular over $k$. See, e.g., \cite[Proposition 13.4.6]{FJ08} for an example of such a field. Consider the finite Galois embedding problem $\alpha_M$ (which is well-defined from the regularity condition). As $M$ is PAC, we may apply Proposition \ref{prop:pac} to get that $\alpha_M$ has a geometric Galois solution. It then remains to apply Lemma \ref{prop:desc} to finish the proof of Proposition \ref{prop:sssecI}.
\end{proof}

\begin{remark}
With the notation of Proposition \ref{prop:sssecI}, if $\alpha$ splits, then it is enough that the field $M$ we consider in the proof is ample (instead of PAC) and one may replace Proposition \ref{prop:pac} by \cite[Main Theorem A]{Pop96}. In particular, if $X$ denotes an indeterminate, taking $M$ equal to the Henselization of $k(X)$ with respect to the $X$-adic valuation yields the extra conclusion that $k_0$ can be chosen to have transcendence degree at most 1 over $k$.
\end{remark}

\section{Proof of Theorem \ref{thm:intro}} \label{sec:proof}

The aim of this section is to prove the following result, which generalizes Theorem \ref{thm:intro}:

\begin{theorem} \label{thm:main}
Let $k$ be an arbitrary field and $T$ an indeterminate. Then each finite embedding problem over $k(T)$ has a regular solution.
\end{theorem}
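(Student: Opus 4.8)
The plan is to reduce the general statement to the Galois case handled by Proposition~\ref{prop:sssecI}, solve it after a base change, and then specialize everything back down via Proposition~\ref{lemma:0}(2). Write the given problem as $\alpha : G \rightarrow {\rm{Aut}}(L/k(T))$. Since Proposition~\ref{lemma:0}(2) applies with its base field taken to be $k(T)$ and with $\kappa = k$, it suffices to exhibit a tuple $\mathbf{T}=(T_1,\dots,T_n)$ of indeterminates together with a solution to $\alpha_{k(T)(\mathbf{T})}$ over $k(T,\mathbf{T})$ whose solution field $E$ satisfies $E \cap \overline{k} = L \cap \overline{k}$; a single application of Proposition~\ref{lemma:0}(2) then specializes $\mathbf{T}$ to a point of $k(T)^n$ and produces a regular solution to $\alpha$ over $k(T)$. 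Thus the task becomes purely geometric: build one solution with the correct constant field over a \emph{rational} extension of $k(T)$.

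The second step is the passage from the non-Galois problem $\alpha$ to a Galois one, and this is exactly where waiving normality of the solution field (as in Conjecture~\ref{conj:DD1}) buys freedom. Let $\widehat{L}$ be the Galois closure of $L/k(T)$, put $\Gamma = {\rm{Gal}}(\widehat{L}/k(T))$ and $\Gamma_1 = {\rm{Gal}}(\widehat{L}/L)$, so that ${\rm{Aut}}(L/k(T)) \cong N_\Gamma(\Gamma_1)/\Gamma_1$ and $\alpha$ is an epimorphism onto this subquotient. I would fix an auxiliary finite group with a core-free self-normalizing subgroup (for instance the pair $(S_m,S_{m-1})$, where $N_{S_m}(S_{m-1})=S_{m-1}$ and $\bigcap_g g S_{m-1} g^{-1} = 1$) and use it to assemble, by a (twisted wreath / induced) product construction, a finite group $\widehat{G}$ with an epimorphism $\rho:\widehat{G}\rightarrow\Gamma$ and a core-free subgroup $U \le \rho^{-1}(\Gamma_1)$ such that $N_{\widehat{G}}(U)/U \cong G$ compatibly with $\alpha$ and $\rho$. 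The point is that a regular Galois realization $\widehat{E}$ of $\widehat{G}$ over a field $F' \supseteq k(T)$, dominating $\widehat{L}F'$ through $\rho$, has fixed field $E := \widehat{E}^{U}$ solving $\alpha_{F'}$: one gets ${\rm{Aut}}(E/F') = N_{\widehat{G}}(U)/U \cong G$, the restriction ${\rm{res}}^{E/F'}_{LF'/F'}$ is $\alpha$ by construction, and a careful choice of the action of $U$ on constants forces $E \cap \overline{k} = L \cap \overline{k}$. In this way $\alpha$ is reduced to the finite Galois embedding problem $\rho:\widehat{G}\rightarrow\Gamma = {\rm{Gal}}(\widehat{L}/k(T))$ over $k(T)$, which the product construction moreover lets me take to be split.

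For the Galois problem $\rho$ I would invoke Proposition~\ref{prop:sssecI} (built on Propositions~\ref{prop:pac} and~\ref{prop:desc}): there is a regular finitely generated extension $k_0/k(T)$ such that $\rho_{k_0}$ admits a geometric Galois solution, i.e.\ a regular Galois solution $\widehat{E}_0$ over $k_0(Y)$ for a new indeterminate $Y$; the fixed field $\widehat{E}_0^{U}$ then solves $\alpha_{k_0(Y)}$ with constant field $L\cap\overline{k}$. It remains to transport this solution from the function field $k_0(Y)$ down to a \emph{purely transcendental} extension $k(T,\mathbf{T})$ of $k(T)$, so that Proposition~\ref{lemma:0}(2) can finish the argument. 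Here I would exploit that $\rho$ has been arranged to split: by the remark following Proposition~\ref{prop:sssecI} one may take the auxiliary field to be the Henselization of $k(T)(X)$ at the $X$-adic valuation, so that $k_0$ has transcendence degree at most $1$ over $k(T)$ and carries a $k(T)$-rational place. Specializing the base-change parameter at this place, which is again governed by Hilbert's irreducibility theorem for $k(T)$ and by the constant-field bookkeeping of Proposition~\ref{lemma:0}, brings the solution down to some $k(T,\mathbf{T})$ with unchanged constant field, completing the reduction demanded in the first step.

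The main obstacle I expect is twofold. First, the group theory: one must not merely secure $N_{\widehat{G}}(U)/U \cong G$, but also that $\rho$ induces \emph{exactly} $\alpha$ on $N_{\widehat{G}}(U)/U \rightarrow N_\Gamma(\Gamma_1)/\Gamma_1$, that $U$ is genuinely core-free (so that $\widehat{E}$ really is the Galois closure of $E$ and ${\rm{res}}^{E/F'}_{L/F'}$ has all of ${\rm{Aut}}(E/F')$ as domain), and that the action of $U$ on the constants of $\widehat{E}$ cuts out precisely $L\cap\overline{k}$ and no more of $\widehat{L}\cap\overline{k}$; threading all of these restriction maps together is exactly the kind of compatibility verified in Proposition~\ref{lemma:0}. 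Second, and more delicate, is the descent: Proposition~\ref{prop:sssecI} supplies a solution only after a regular finitely generated base change, which need not be rational, whereas Proposition~\ref{lemma:0}(2) specializes a purely transcendental tuple. Bridging this gap, by using the split structure of $\rho$ and the ampleness of the Henselization to keep the base change of transcendence degree at most $1$ and equipped with a $k(T)$-rational place, is where the real care will be needed.
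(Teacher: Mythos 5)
Your opening reduction is correct and coincides with the paper's: Theorem \ref{thm:main} is indeed reduced, via Proposition \ref{lemma:0}(2) with $\kappa=k$, to producing a solution of $\alpha_{k(T,\mathbf{T})}$ over a purely transcendental extension $k(T,\mathbf{T})$ whose solution field has constants $L\cap\overline{k}$ (this is Proposition \ref{thm:cst} applied over the base field $k(T)$). The fatal gap is in your descent step. After Proposition \ref{prop:sssecI} you hold a geometric Galois solution over $k_0(Y)$, where $k_0$ is a regular \emph{finitely generated} extension of $k(T)$ --- but not a purely transcendental one, and this distinction is the whole difficulty. Proposition \ref{lemma:0} and Hilbert's irreducibility theorem only specialize \emph{indeterminates}, i.e., they operate over a rational base. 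Your proposed use of them at the $k(T)$-rational place that $k_0$ inherits from the Henselization is not an instance of either: if you specialize the transcendence-basis parameter of $k_0/k(T)$ by Hilbert irreducibility, you land in a \emph{finite} extension of $k(T)$ rather than in $k(T)$ itself (which is the same obstruction you started with), while reduction of the cover at the single rational place of $k_0$ comes with no Zariski-dense family of choices and hence no guarantee that the Galois group, the separability, or the constant field survive. Worse, if this step did work, it would be a completely general mechanism (solve over the Henselization by Pop, descend to a finitely generated $k_0$ with a rational place by Lemma \ref{prop:desc}, push down at the place): it would produce a regular \emph{Galois} solution to your split problem $\rho:\widehat{G}\to{\rm Gal}(\widehat{L}/k(T))$ over $k(T)$, i.e., it would prove Conjecture \ref{conj:DD1}, which is open. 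So the step is not repairable as stated. (Your preliminary wreath-product construction of $(\widehat{G},U)$ with $N_{\widehat{G}}(U)/U\cong G$ compatibly with $\alpha$ is also left unverified, but that is a secondary issue.)

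What is missing is precisely the paper's device for crossing this gap, and it is also the exact point where non-normality of the solution field is exploited. First, no auxiliary group theory is needed to make the problem Galois: setting $L'=L^{{\rm Aut}(L/k)}$ one has ${\rm Aut}(L/k)={\rm Gal}(L/L')$, so $\alpha$ \emph{is} a finite Galois embedding problem over $L'$, and Proposition \ref{prop:sssecI} is applied over $L'$, yielding a Galois solution $N/k_0(Z)$ with $k_0/L'$ regular and finitely generated. Second --- and this is the key trick --- the paper writes $k_0=k(\mathbf{Y},y)$ with $\mathbf{Y}$ a separating transcendence basis and $y$ a primitive element, and adjoins to $N$ a root $x$ of the rigid cubic $P_y(T,X)=X^3+(T-y)X+(T-y)$ of Proposition \ref{lemma 11}, for a fresh indeterminate $T$. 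Lemma \ref{lem:sssecIII} then shows that ${\rm Aut}(N(T,x)/k(Z,\mathbf{Y},T))={\rm Gal}(N(T,x)/k_0(Z,T,x))\cong{\rm Gal}(N/k_0(Z))\cong G$: any automorphism over the rational base $k(Z,\mathbf{Y},T)$ is forced by the rigidity of the cubic to fix $y$, so the non-rational part of $k_0$ is absorbed into a \emph{non-normal} solution field $E=N(T,x)$ defined over a purely transcendental extension, at the harmless cost of losing all control over the Galois closure of $E$. It is this absorption --- not a specialization --- that brings the solution down to a rational base, after which Proposition \ref{lemma:0}(2) concludes exactly as in your first step.
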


By combining Proposition \ref{lemma:0}(1) and Theorem \ref{thm:main}, we immediately get the following corollary, which generalizes Corollary \ref{coro:intro}:

\begin{corollary} \label{coro:spec}
Every finite embedding problem over a Hilbertian field has a solution.
\end{corollary}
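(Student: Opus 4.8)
The plan is to derive the corollary directly from the two main inputs of the paper, namely the geometric existence result Theorem~\ref{thm:main} and the specialization result Proposition~\ref{lemma:0}(1). The underlying idea is that over a Hilbertian field one can always introduce an auxiliary indeterminate, solve the resulting embedding problem \emph{geometrically} over the rational function field by means of Theorem~\ref{thm:main}, and then specialize the indeterminate back to a value in the base field via Hilbert's irreducibility theorem, which is exactly the content packaged in Proposition~\ref{lemma:0}(1).

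Concretely, let $k$ be Hilbertian and let $\alpha:G\to{\rm{Aut}}(L/k)$ be an arbitrary finite embedding problem over $k$. First I would introduce a single indeterminate $T$ and pass to the associated embedding problem $\alpha_{k(T)}:G\to{\rm{Aut}}(L(T)/k(T))$ over $k(T)$; this is legitimate because $k(T)/k$ is regular, hence $k(T)$ is linearly disjoint from the Galois closure $\widehat L$ of $L$ over $k$, so that $\alpha_{k(T)}$ is well defined in the sense of \S\ref{ssec:fep}. Next I would apply Theorem~\ref{thm:main} to $\alpha_{k(T)}$ to obtain a regular solution, and in particular a solution; denote by $E$ the associated solution field. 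Finally I would invoke Proposition~\ref{lemma:0}(1) with $n=1$ and ${\bf T}=T$: since $\alpha_{k(T)}$ has a solution with solution field $E$ and $k$ is Hilbertian, there is a Zariski-dense set of $t\in k$ for which $\alpha$ itself admits a solution with solution field $E_t$. In particular $\alpha$ has a solution, which is the assertion.

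The substance of the argument lies entirely in the two results being combined, so at the level of this corollary there is no genuine obstacle; the only points requiring a word of care are the well-definedness of $\alpha_{k(T)}$, which I would justify via the regularity of $k(T)/k$ together with the linear-disjointness criterion recalled in \S\ref{ssec:rm}, and the observation that the regular solution produced by Theorem~\ref{thm:main} is \emph{a fortiori} a solution in the sense needed to feed into Proposition~\ref{lemma:0}(1). Equivalently, one may phrase the conclusion in the language of \S\ref{ssec:fep}: a regular solution to $\alpha_{k(T)}$ is by definition a geometric solution to $\alpha$, and Proposition~\ref{lemma:0}(1) is precisely the mechanism that converts a geometric solution over a Hilbertian base into an honest solution over that base.
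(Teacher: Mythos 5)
Your proposal is correct and matches the paper's own argument exactly: the paper derives Corollary~\ref{coro:spec} precisely by applying Theorem~\ref{thm:main} to $\alpha_{k(T)}$ and then specializing via Proposition~\ref{lemma:0}(1), which is what you do. The points of care you flag (well-definedness of $\alpha_{k(T)}$ via regularity of $k(T)/k$, and that a regular solution is in particular a solution) are exactly the right ones and are handled the same way in the paper.
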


We split the proof of Theorem \ref{thm:main} into two parts. First, we claim it suffices to ``regularly" solve embedding problems over arbitrary fields after adjoining finitely many indeterminates:

\begin{proposition} \label{thm:cst}
Let $k$ be a field and $\alpha : G \rightarrow {\rm{Aut}}(L/k)$ a finite embedding problem over $k$. Then there exists a finite non-empty tuple ${\bf{T}}$ of algebraically independent indeterminates such that $\alpha_{k({\bf{T}})}$ has a solution, whose associated solution field $E$ satisfies $E \cap \overline{k} = L$.
\end{proposition}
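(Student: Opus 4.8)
The plan is to reduce the possibly non-Galois embedding problem $\alpha$ to a Galois one over $k$, to solve that one geometrically after a regular base change by means of Proposition~\ref{prop:sssecI}, to recover a solution of $\alpha$ by passing to an appropriate fixed field, and finally to descend the base to a rational function field. Write $\widehat{L}$ for the Galois closure of $L/k$, put $\Gamma = {\rm{Gal}}(\widehat{L}/k)$, $N = {\rm{Gal}}(\widehat{L}/L)$ and $H = N_{\Gamma}(N)$, and recall from \S\ref{ssec:rm} that ${\rm{res}}_{L/k}^{\widehat{L}/k}$ induces an isomorphism $H/N \cong {\rm{Aut}}(L/k)$, while $N$ is core-free in $\Gamma$ because $\widehat{L}$ is the Galois closure of $L$.

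First I would do the group theory. Transport $\alpha$ through the isomorphism above to get an epimorphism $\overline{\alpha}\colon G \to H/N$, and form the fibre product $P = \{(g,h) \in G \times H : \overline{\alpha}(g) = hN\}$. Its first projection $p\colon P \to G$ is onto with kernel $G_0 := \{1\} \times N$, and its second projection $q\colon P \to H$ is onto. Since $H$ normalises $N$, every element of $P$ normalises $G_0$; hence $N_P(G_0) = P$, the map $p$ descends to an isomorphism $P/G_0 \cong G$, and $q(G_0) = N$. As $P$ only maps onto $H$ and not onto $\Gamma$, I would then enlarge it by an induction (wreath-product) construction along the cosets $\Gamma/H$ --- equivalently, by realising the problem over the intermediate field $\widehat{L}^{H}$ and restricting scalars to $k$. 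This yields a finite group $\widehat{G}$, an epimorphism $\widehat{\alpha}\colon \widehat{G} \to \Gamma$, and a core-free subgroup $G_0 \leq \widehat{G}$ (abusing notation) with $\widehat{\alpha}(G_0) = N$, $\widehat{\alpha}(N_{\widehat{G}}(G_0)) \subseteq H$, and an isomorphism $N_{\widehat{G}}(G_0)/G_0 \cong G$ compatible with $\alpha$ and with the restriction $N_{\widehat{G}}(G_0)/G_0 \to H/N \cong {\rm{Aut}}(L/k)$. Checking these properties is elementary but technical group theory that I would isolate as a separate lemma.

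Next I would feed the finite Galois embedding problem $\widehat{\alpha}\colon \widehat{G} \to {\rm{Gal}}(\widehat{L}/k)$ into Proposition~\ref{prop:sssecI}, obtaining a regular finitely generated extension $k_0/k$ and a geometric Galois solution of $\widehat{\alpha}_{k_0}$, that is, a Galois extension $\widehat{E}/k_0(S)$ (with $S$ an indeterminate) such that ${\rm{Gal}}(\widehat{E}/k_0(S)) \cong \widehat{G}$ compatibly with $\widehat{\alpha}$ and $\widehat{E} \cap \overline{k} = \widehat{L}$ (the latter because $k_0/k$ is regular). Setting $E = \widehat{E}^{G_0}$, core-freeness of $G_0$ makes $\widehat{E}$ the Galois closure of $E/k_0(S)$; the group-theoretic properties then give $L k_0(S) \subseteq E$, ensure that ${\rm{res}}^{E/k_0(S)}_{L k_0(S)/k_0(S)}$ is defined on all of ${\rm{Aut}}(E/k_0(S)) = N_{\widehat{G}}(G_0)/G_0 \cong G$, and show that the resulting isomorphism solves $\alpha_{k_0(S)}$. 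Finally $\widehat{\alpha}(G_0) = N$ yields $E \cap \overline{k} = \widehat{E}^{G_0} \cap \widehat{L} = \widehat{L}^{N} = L$. Thus $E$ is the solution field of a solution of $\alpha_{k_0(S)}$ whose constant field over $k$ is exactly $L$.

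The remaining step --- descending from $k_0(S)$ to a purely transcendental extension $k({\bf T})$ while preserving the constant field $L$ --- is where I expect the real difficulty. Since $k_0$ is merely regular and finitely generated (in particular possibly non-rational) and $k$ is arbitrary (in particular possibly non-Hilbertian), one cannot specialise $k_0$ down to $k$ directly, and naively regarding a transcendence basis of $k_0$ as new indeterminates would generically inflate both the Galois group and the automorphism group. The plan is to spread $E$ out to a family over an integral $k$-model of $k_0(S)$ and to specialise it, exploiting that every rational function field over $k$ is Hilbertian --- so that Proposition~\ref{lemma:0}(1) becomes available over an auxiliary field $k({\bf X})$ --- together with the crucial slack of the non-Galois setting: the Galois closure of the sought solution is allowed to acquire new constants, and only the solution field $E$ must keep constant field $L$. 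Arranging the specialisation so that the automorphism group stays equal to $G$ and the constant field of $E$ stays equal to $L$ is the heart of the matter; it is precisely here that dropping the normality requirement makes the statement unconditional, whereas the same control on $\widehat{E}$ itself would settle the still-open Galois case of Conjecture~\ref{conj:DD1}.
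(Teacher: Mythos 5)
Your proposal stalls precisely at the step you yourself flag as ``the heart of the matter,'' and that step is a genuine gap, not a technicality: the descent from $k_0(S)$ to a purely transcendental extension of $k$ is never carried out, and the route you sketch for it cannot be executed. Write $k_0=k({\bf{Y}},y)$ with ${\bf{Y}}$ a separating transcendence basis of $k_0/k$ and $y$ a primitive element of the finite separable extension $k_0/k({\bf{Y}})$; the obstruction is the algebraic element $y$. Proposition \ref{lemma:0}(1) requires a solution defined over a purely transcendental extension of a Hilbertian field, which one then specializes; your solution lives over $k_0(S)$, a finite and in general nontrivial extension of the rational function field $k({\bf{Y}},S)$, so the proposition does not apply. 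No specialization of indeterminates removes $y$, and since $k$ is arbitrary (not Hilbertian) you cannot specialize ${\bf{Y}}$ into $k$ either; producing a solution over some $k({\bf{T}})$ is exactly the statement being proved, so the plan is circular. The paper's resolution is not specialization but \emph{rigidification}: it adjoins to its solution field $N$ over $k_0(Z)$ a root $x$ of the cubic $P_y(T,X)=X^3+(T-y)X+(T-y)$ of Proposition \ref{lemma 11} and sets $E=N(T,x)$. Lemma \ref{lem:sssecIII} then shows ${\rm{Aut}}(E/k(Z,{\bf{Y}},T))={\rm{Gal}}(E/k_0(Z,T,x))$: any automorphism over $k(Z,{\bf{Y}},T)$ must fix $y$, because distinct parameters yield distinct cubic fields, hence fixes $k_0$ pointwise, and must then fix $x$, because $E$ contains only one root of the cubic. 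Thus the automorphism group over the rational field $k(Z,{\bf{Y}},T)$ is exactly ${\rm{Gal}}(N/k_0(Z))\cong G$, and $E$ stays regular over $L$ because $P_y(T,X)$ remains irreducible over $\overline{N}(T)$. This cubic trick is the missing idea, and it is also the precise mechanism by which non-normality of $E$ is exploited---something your sketch only gestures at.

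A second, smaller problem is your reduction to the Galois case. The induction/wreath-product lemma you defer as ``elementary but technical'' is not routine: for the naive candidate $\widehat{G}=P\wr_{\Gamma/H}\Gamma$ with $G_0=\{1\}\times {\rm{Gal}}(\widehat{L}/L)$ placed in the coordinate at the trivial coset, the normalizer $N_{\widehat{G}}(G_0)$ is far larger than a copy of $G$ (it contains all functions constant on the ${\rm{Gal}}(\widehat{L}/L)$-orbits of the remaining cosets, plus a centralizer slack), so the required properties fail and a correct construction would need genuine work---indeed the only easy way I see to prove your lemma is to derive it from the very proposition under discussion. The paper sidesteps all of this by going \emph{down} rather than up: setting $L'=L^{{\rm{Aut}}(L/k)}$, the extension $L/L'$ is Galois with group ${\rm{Aut}}(L/k)$, so $\alpha$ literally \emph{is} a finite Galois embedding problem over $L'$, to which Proposition \ref{prop:sssecI} applies directly; the base field $k$ is recovered afterwards by the same cubic trick, since $k_0\supseteq L'$ and $k_0=k({\bf{Y}},y)$. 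To repair your proposal you would need both to prove your group-theoretic lemma (or adopt the paper's reduction via $L'$) and, unavoidably, to replace the specialization plan by the rigidification argument.
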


\begin{proof}[Proof of Theorem \ref{thm:main} under Proposition \ref{thm:cst}]
Let $k$ be a field, $T$ an indeterminate, and $\alpha : G \rightarrow {\rm{Aut}}(L/k(T))$ a finite embedding problem over $k(T)$. By Proposition \ref{thm:cst}, there exist a finite non-empty tuple ${\bf{T}}$ of algebraically independent indeterminates and a solution to the finite embedding problem $\alpha_{k(T,{\bf{T}})}$, whose solution field $E$ satisfies $E \cap \overline{k(T)}=L$. In particular, one has $E \cap \overline{k} = L \cap \overline{k}$. It then remains to apply Proposition \ref{lemma:0}(2) to get Theorem \ref{thm:main}.
\end{proof}

We now proceed to prove Proposition \ref{thm:cst} and start by recalling the following result, which is \cite[Proposition 2.3]{LP18}:

\begin{proposition} \label{lemma 11}
Given a field $k$ and $y \in k$, set $P_y(T,X)=X^3 + (T-y) X + (T-y) \in k[T][X].$ Then the polynomial $P_y(T,X)$ is irreducible, separable, and of Galois group $S_3$ over $k(T)$. Moreover, if $k_y$ denotes the field generated over $k(T)$ by any given root of $P_y(T,X)$, then, given $y_1 \not=y_2$ in $k$, one has $k_{y_1} \not= k_{y_2}$.
\end{proposition}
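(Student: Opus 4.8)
The plan is to reduce to a normalized polynomial and then treat irreducibility, separability, the Galois group, and the separation of the fields $k_y$ in turn, routing the last two through ramification so as to stay characteristic-free. Since $y\in k$ and $T$ is transcendental over $k$, the substitution $s=T-y$ gives $k(T)=k(s)$ with $s$ transcendental, and $P_y$ becomes $X^3+sX+s\in k[s][X]$; it suffices to study this polynomial over $k(s)$. Irreducibility follows from the Eisenstein criterion at the prime $s$ of $k[s]$: the non-leading coefficients $0,s,s$ are divisible by $s$ while the constant term $s$ is not divisible by $s^2$. For separability, an irreducible polynomial is separable once its $X$-derivative is nonzero, and here that derivative is $3X^2+s$, which equals $s\neq 0$ in characteristic $3$ and is visibly nonzero otherwise.

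For the Galois group I would argue by ramification rather than by the discriminant. Solving $\alpha^3+s\alpha+s=0$ for $s$ yields $s=-\alpha^3/(\alpha+1)$, so $k_y=k(s)(\alpha)=k(\alpha)$ is a rational function field and $[k(\alpha):k(s)]=3$. Viewing $s$ as a degree-$3$ rational function of $\alpha$, its pole divisor consists of a double pole at $\alpha=\infty$ and a simple pole at $\alpha=-1$, so over the place $s=\infty$ there lie two places, of ramification indices $2$ and $1$. In a normal extension the ramification indices over a fixed place coincide, so $k(\alpha)/k(s)$ is not normal; being separable of degree $3$ and not Galois, its Galois closure has group $S_3$, the only other transitive possibility, the cyclic group $A_3$, would force $k(\alpha)/k(s)$ itself to be Galois. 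In characteristic $\neq 2$ one could alternatively note that the discriminant $-s^2(4s+27)$ is a non-square because $4s+27$ has odd degree, but this criterion fails in characteristic $2$, which is why I prefer the ramification argument.

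For the final claim I would recover $y$ from $k_y$ as a ramification invariant. The function $s=-\alpha^3/(\alpha+1)$ has a single zero, of order $3$ at $\alpha=0$, so the place $T=y$ (that is, $s=0$) is totally ramified in $k_y$. I claim it is the only finite place of $k(T)$ with this property. Indeed the finite ramified places lie above the zeros of $\mathrm{disc}_X(P_y)=-(T-y)^2\,(4(T-y)+27)$; over the zero $T=y-27/4$ of the second factor, which is a distinct place only when the characteristic is not $2$ or $3$, the cubic has one double and one simple root, giving indices $2,1$ and hence not total ramification, while in characteristic $2$ this factor is a nonzero constant and in characteristic $3$ it is a unit multiple of $T-y$, so in either case no finite ramified place other than $T=y$ appears. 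Thus $T=y$ is the unique finite totally ramified place of $k(T)$ in $k_y$; since this is an invariant of the extension $k_y/k(T)$, and is unchanged if a different root is chosen because the three conjugate cubic subfields share the same ramification profile over $k(T)$, the field $k_y$ determines $y$, and therefore $y_1\neq y_2$ forces $k_{y_1}\neq k_{y_2}$.

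The main obstacle is uniformity across characteristics: the classical discriminant test distinguishing $S_3$ from $A_3$ collapses in characteristic $2$, where $-s^2(4s+27)$ becomes the square $s^2$, and characteristic $3$ brings wild ramification at $T=y$ with index equal to the characteristic. Channeling both the Galois-group computation and the recovery of $y$ through the ramification of the rational cover $s=-\alpha^3/(\alpha+1)$ is precisely what makes the argument go through in all characteristics at once.
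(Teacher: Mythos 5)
Your proof is correct, but there is an important point of context: the paper does not prove this proposition at all --- it is recalled verbatim as \cite[Proposition 2.3]{LP18}, so there is no in-paper argument to compare against, and what you have produced is a self-contained substitute for that citation. On the merits: the normalization $s=T-y$, Eisenstein at the prime $(s)$ of $k[s]$, and the derivative computation settle irreducibility and separability in every characteristic. Your decisive idea --- solving the cubic relation for $s=-\alpha^3/(\alpha+1)$, so that $k_y=k(\alpha)$ is a rational function field and the cover $\mathbb{P}^1_\alpha\to\mathbb{P}^1_s$ can be read off from the divisor of $s$ --- cleanly yields both the non-normality of $k(\alpha)/k(s)$ (indices $2,1$ over $s=\infty$, whence Galois group $S_3$ rather than $A_3$) and the total ramification over $s=0$; this is exactly the kind of argument that survives characteristics $2$ and $3$, where the discriminant test and tameness break down. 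The separation of the fields $k_y$ via ``$T=y$ is the unique finite place of $k(T)$ totally ramified in $k_y$'' is also sound: you correctly invoke the standard fact that a finite place ramified in $k_y/k(T)$ must divide $\mathrm{disc}_X(P_y)=-(T-y)^2\bigl(4(T-y)+27\bigr)$, which is valid for separable extensions in any characteristic, wild ramification included, and your case analysis of the factor $4(T-y)+27$ in characteristics $2$, $3$, and $\neq 2,3$ is accurate (in the last case you do not even need the indices $(2,1)$ at $T=y-27/4$: the existence of two distinct places above it already rules out total ramification there). Your closing invariance remark can be stated more simply: the computation applies to whichever root is chosen, so for any choice the extension $k_y/k(T)$ has $T=y$ as its unique totally ramified finite place, hence $k_{y_1}=k_{y_2}$ forces $y_1=y_2$. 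Compared with the cited proof of Legrand--Paran, your route buys an explicit genus-zero model of $k_y$ and a single ramification-theoretic invariant that does double duty, determining the Galois group and distinguishing the fields $k_y$ simultaneously.
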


Let $k$ be a field and $\alpha : G \rightarrow {\rm{Aut}}(L/k)$ a finite embedding problem over $k$. Set $L'=L^{{\rm{Aut}}(L/k)}$. As ${\rm{Aut}}(L/k) = {\rm{Gal}}(L/L')$, the finite embedding problem $\alpha$ over $k$ can be seen as a finite Galois embedding problem $\alpha' : G \rightarrow {\rm{Gal}}(L/L')$ over $L'$. By Proposition \ref{prop:sssecI}, there exists an extension $k_0$ of $L'$ that is regular and finitely generated, and such that the finite Galois embedding problem $\alpha'_{k_0}$ has a geometric Galois solution. That is, there exist an indeterminate $Z$, a finite Galois extension $N/k_0(Z)$ such that $Lk_0(Z) \subseteq N$ and $N \cap \overline{k_0}=Lk_0$, and an isomorphism
$\beta : {\rm{Gal}}(N/k_0(Z)) \rightarrow G$
such that $\alpha' \circ \beta = {\rm{res}}^{N/k_0(Z)}_{L/L'}$, i.e.,
\begin{equation} \label{!!}
\alpha \circ \beta = {\rm{res}}^{N/k_0(Z)}_{L/k}.
\end{equation}

Let ${\bf{Y}}$ be a separating transcendence basis of $k_0$ over $L'$. Since the extensions $k_0/L'({\bf{Y}})$ and $L'({\bf{Y}})/k({\bf{Y}})$ are finite and separable, the same is true for the extension $k_0/k({\bf{Y}})$. Hence, there exists $y \in k_0$ such that $k_0=k({\bf{Y}},y).$ Let $T$ be an extra indeterminate. Then consider
$$P_{y}(T,X) = X^3+(T-y)X + (T-y) \in k_0[T][X].$$
Let $x$ be a root of $P_{y}(T,X)$ and denote the field $N(T,x)$ by $E$. By Proposition \ref{lemma 11}, one has
$$3=[k_0(T,x) : k_0(T)]=[k_0(Z,T,x) : k_0(Z,T)].$$
\vspace{-12mm}
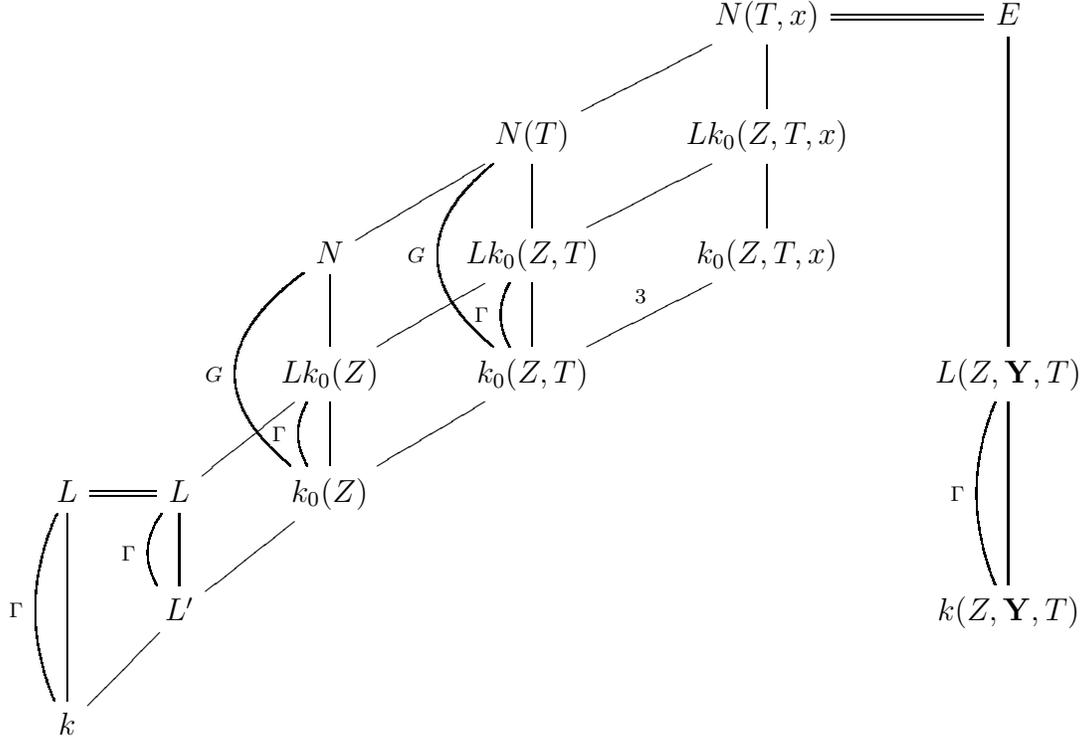
\begin{figure}[h!]
\[ \xymatrix{
& & & & N(T,x) \ar@{-}[d]  \ar@{=}[r] & E \ar@{-}[ddd] \\
& & & N(T) \ar@{-} @/_3pc/[dd]_{G} \ar@{-}[ru] \ar@{-}[d]  & Lk_0(Z,T,x) \ar@{-}[d]  & \\
& & N \ar@{-} @/_3pc/[dd]_{G} \ar@{-}[ru] \ar@{-}[d]  & Lk_0(Z,T) \ar@{-} @/_1pc/[d]_{\Gamma} \ar@{-}[ru] \ar@{-}[d]  & k_0(Z,T,x) & \\
& & Lk_0(Z) \ar@{-} @/_1pc/[d]_{\Gamma} \ar@{-}[ru] \ar@{-}[d]  & k_0(Z,T) \ar@{-}[ru]^3 & & L(Z,{\bf{Y}},T) \ar@{-}[dd] \ar@{-} @/_1pc/[dd]_{\Gamma} \\
L \ar@{-} @/_1pc/[dd]_{\Gamma} \ar@{=}[r] \ar@{-}[dd] & L \ar@{-} @/_1pc/[d]_{\Gamma} \ar@{-}[ru] \ar@{-}[d] & k_0(Z) \ar@{-}[ru] & & &  \\
& L' \ar@{-}[ru] & & & & k(Z,{\bf{Y}},T) \\
k \ar@{-}[ru] & & & & &
}
\]
\caption{Construction of the function field $E$}\label{Fig?}
\end{figure}

Now, we need the following lemma:

\begin{lemma} \label{lem:sssecIII}
One has ${\rm{Aut}}(E/k(Z,{\bf{Y}},T)) = {\rm{Gal}}(E/k_0(Z,T,x)).$
\end{lemma}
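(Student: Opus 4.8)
The claim equates two automorphism groups of $E = N(T,x)$ over two different base fields: $k(Z,\mathbf{Y},T)$ on one side, and $k_0(Z,T,x)$ on the other. My plan is to show the two containments separately, or rather to identify a single natural subgroup of $\mathrm{Aut}(E/k(Z,\mathbf{Y},T))$ with $\mathrm{Gal}(E/k_0(Z,T,x))$. The key structural fact I would exploit is that $E = N \cdot k_0(Z,T,x)$ is a compositum: $N$ is Galois over $k_0(Z)$ with group $G$ (via $\beta$), while $k_0(Z,T,x)$ is obtained by adjoining a root $x$ of the cubic $P_y(T,X)$, which by Proposition~\ref{lemma 11} is irreducible of Galois group $S_3$ over $k_0(T)$, hence over $k_0(Z,T)$.

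**The main idea: linear disjointness.** The crux is to establish that $N(T)$ and $k_0(Z,T,x)$ are linearly disjoint over $k_0(Z,T)$. The extension $N(T)/k_0(Z,T)$ is Galois with group $G$ (base change of $N/k_0(Z)$ by the purely transcendental $T$, which preserves the Galois group and linear disjointness from $\overline{k_0}$ since $N \cap \overline{k_0} = Lk_0$). The extension $k_0(Z,T,x)/k_0(Z,T)$ is the degree-$3$ non-Galois extension cut out by the cubic. Since these two extensions sit inside $E$ and their ``Galois-theoretic natures'' are unrelated — one is governed by $G$ coming from the geometric solution, the other by the root $x$ of a polynomial with coefficients in $k_0(T)$ — I would argue they are linearly disjoint over $k_0(Z,T)$. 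Concretely, the Galois closure of $k_0(Z,T,x)$ is $k_0(Z,T)$-linearly disjoint from $N(T)$ because the former has Galois group $S_3$ realized by a specific cubic whose splitting field meets $N(T)$ trivially over the base. Granting this, $[E : k_0(Z,T,x)] = [N(T) : k_0(Z,T)] = |G|$ and every element of $\mathrm{Gal}(N(T)/k_0(Z,T)) \cong G$ extends uniquely to an element of $\mathrm{Gal}(E/k_0(Z,T,x))$, giving $\mathrm{Gal}(E/k_0(Z,T,x)) \cong G$.

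**Matching the two groups.** For the reverse identification, I would show every $k(Z,\mathbf{Y},T)$-automorphism of $E$ in fact fixes $k_0(Z,T,x)$ pointwise. The subtlety is that $k_0 = k(\mathbf{Y},y)$ is \emph{not} fixed pointwise by $k(Z,\mathbf{Y},T)$-automorphisms a priori, since $y \in k_0$ need not lie in $k(\mathbf{Y})$. So I must argue that any $\sigma \in \mathrm{Aut}(E/k(Z,\mathbf{Y},T))$ nonetheless stabilizes and fixes $k_0(Z,T,x)$. The element $x$ is a root of $P_y(T,X) = X^3 + (T-y)X + (T-y)$; an automorphism fixing $k(Z,\mathbf{Y},T)$ sends $x$ to a root of $P_{\sigma(y)}(T,X)$, and I would use the rigidity from Proposition~\ref{lemma 11} (distinct $y$ give distinct root fields $k_y$) together with the fact that $\sigma$ permutes conjugates to pin down $\sigma(y) = y$ and then $\sigma(x) = x$. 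This forces $\sigma$ to fix $k_0 = k(\mathbf{Y},y)$ and $x$, hence all of $k_0(Z,T,x)$, giving the inclusion $\mathrm{Aut}(E/k(Z,\mathbf{Y},T)) \subseteq \mathrm{Gal}(E/k_0(Z,T,x))$. The reverse inclusion $\mathrm{Gal}(E/k_0(Z,T,x)) \subseteq \mathrm{Aut}(E/k(Z,\mathbf{Y},T))$ is immediate once I check $k(Z,\mathbf{Y},T) \subseteq k_0(Z,T,x)$, which holds since $k \subseteq k_0$, $\mathbf{Y} \subseteq k_0$, and $T$ is common.

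**Anticipated obstacle.** The hard part will be the argument that a $k(Z,\mathbf{Y},T)$-automorphism must fix $y$ (and hence $k_0$), because this is where the specific role of the cubic $P_y$ and the separating property of $\mathbf{Y}$ enter. I expect to need that $y$ is the \emph{unique} element of $E$ (up to the constraints imposed by the cubic) producing the given root field, exploiting the injectivity statement $y_1 \neq y_2 \Rightarrow k_{y_1} \neq k_{y_2}$ from Proposition~\ref{lemma 11}, combined with the fact that $N/k_0(Z)$ being a \emph{regular} (geometric) solution means $N \cap \overline{k_0} = Lk_0$, so that the ``constant field'' behaviour of $E$ is controlled entirely by $k_0$ and the cubic. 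Verifying the linear disjointness of $N(T)$ and the splitting field of $P_y(T,X)$ over $k_0(Z,T)$ — essentially that $G$ and $S_3$ do not interact — is the other delicate point, and I would handle it via a degree count using that $P_y$ has coefficients in $k_0(T)$ and $N(T)$ is regular over $k_0$ in the relevant sense.
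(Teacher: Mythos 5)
Your overall strategy coincides with the paper's: the inclusion $\mathrm{Gal}(E/k_0(Z,T,x)) \subseteq \mathrm{Aut}(E/k(Z,\mathbf{Y},T))$ is immediate from $k(Z,\mathbf{Y},T) \subseteq k_0(Z,T,x)$, and the substance is to show that every $\sigma \in \mathrm{Aut}(E/k(Z,\mathbf{Y},T))$ fixes $y$ (hence $k_0(Z,T)$ pointwise) and then fixes $x$, using the rigidity of Proposition \ref{lemma 11}. (Your linear-disjointness paragraph is not needed for the lemma itself; it is what the paper uses \emph{after} the lemma to identify $\mathrm{Gal}(E/k_0(Z,T,x))$ with $\mathrm{Gal}(N/k_0(Z))$, and as you justify it --- ``the splitting field meets $N(T)$ trivially over the base'' --- it is circular: that triviality is exactly what Proposition \ref{lemma 11}, applied over the base field $N$, hands you for free.)

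However, the decisive step is missing, and you flag it yourself as an unresolved obstacle rather than proving it. To exploit the injectivity $y_1 \neq y_2 \Rightarrow k_{y_1} \neq k_{y_2}$ you must apply Proposition \ref{lemma 11} over a base field containing \emph{both} $y$ and $\sigma(y)$, and a priori $\sigma(y)$ need not lie in $N$: it is only a $k(Z,\mathbf{Y})$-conjugate of $y$ inside $E$. The paper's fix is to pass to $\widehat{N}$, the Galois closure of $N$ over $k(Z,\mathbf{Y})$, which does contain all such conjugates. Then, assuming $\sigma(y) \neq y$, Proposition \ref{lemma 11} over $\widehat{N}$ gives $\widehat{N}(T,\sigma(x)) \neq \widehat{N}(T,x)$; this descends to $N(T,\sigma(x)) \neq N(T,x)$ (equality downstairs would force equality upstairs by composing with $\widehat{N}$); and since $\sigma(E)=E$ one gets the strict containment $N(T,\sigma(x)) \subsetneq N(T,x) = E$, which contradicts the degree bound $[N(T,\sigma(x)):N(T)] \geq [\widehat{N}(T,\sigma(x)):\widehat{N}(T)] = 3 = [E:N(T)]$. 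This containment-plus-degree-count is the actual mechanism, and nothing in your sketch (including the appeal to $N \cap \overline{k_0} = Lk_0$, which the paper never uses in this lemma) produces it. Note also that $\sigma(x)=x$ does not follow formally from $\sigma(y)=y$: one needs the separate (easier) argument that if $\sigma(x)\neq x$ then $E$ contains two, hence all three, roots of the cubic, so $E$ contains its splitting field over $N(T)$, of degree $6$ by the $S_3$ statement of Proposition \ref{lemma 11}, contradicting $[E:N(T)]=3$.
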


\begin{proof}
We break the proof into two parts.

Firstly, one has ${\rm{Aut}}(E/k(Z,{\bf{Y}},T)) = {\rm{Aut}}(E/k_0(Z, T)).$
Indeed, recall that $k_0(Z,T) = k({\bf{Y}}, y, Z, T)$. Let $\sigma \in {\rm{Aut}}(E/k(Z, {\bf{Y}}, T)) \setminus {\rm{Aut}}(E/k_0(Z, T))$. Then one has $\sigma(y)\not=y$ and $\sigma(x)$ is a root of
$$P_{\sigma(y)}(T,X)=X^3 + (T-\sigma(y))X + (T-\sigma(y)) \in \widehat{N}[T][X],$$
where $\widehat{N}$ denotes the Galois closure of $N$ over $k(Z, {\bf{Y}})$. Proposition \ref{lemma 11} then gives
$\widehat{N}(T, \sigma(x)) \not=\widehat{N}(T,x).$
As $\widehat{N}$ is the compositum of the $k(Z, {\bf{Y}})$-conjugates of $N$, one has
${N}(T, \sigma(x)) \not={N}(T,x).$ Since $\sigma(E) \subseteq E$, we get that ${N}(T, \sigma(x))$ is strictly contained in ${N}(T,x)$ (which is $E$). However, by Proposition \ref{lemma 11}, one has 
$$[N(T,x) : N(T)]=3= [\widehat{N}(T,\sigma(x)) : \widehat{N}(T)] \leq [{N}(T,\sigma(x)) : {N}(T)],$$ 
thus providing a contradiction.

Secondly, one has ${\rm{Aut}}(E/k_0(Z,T)) = {\rm{Gal}}(E/k_0(Z,T,x)).$ Indeed, one has to show that each $\sigma \in {\rm{Aut}}(E/k_0(Z,T))$ fixes $x$. Assume $\sigma$ does not. Then $\sigma(x)$ is another root of $P_{y}(T,X)$ and it is in $E$. Hence, $E$ contains all the roots of $P_{y}(T,X)$ (as this polynomial has degree 3 in $X$). By Proposition \ref{lemma 11}, $[E:N(T)] = 6$, a contradiction.
\end{proof}

Next, by Proposition \ref{lemma 11}, the polynomial $P_{y}(T,X)$ is irreducible over $N(T)$, that is, the map ${\rm{res}}^{E/k_0(Z,T,x)}_{N(T)/k_0(Z,T)}$ is an isomorphism. Moreover, the map ${\rm{res}}^{N(T)/k_0(Z,T)}_{N/k_0(Z)}$ is an isomorphism. Then apply Lemma \ref{lem:sssecIII} to get that the map
$$\left \{ \begin{array} {ccc}
 {\rm{Aut}}(E/k(Z,{\bf{Y}},T)) & \longrightarrow & {\rm{Gal}}(N/k_0(Z)) \\
    \sigma & \longmapsto & \sigma|_{N} \end{array} \right. $$
is a well-defined isomorphism, which we denote by ${\rm{res}}^{E/k(Z, {\bf{Y}}, T)}_{N/k_0(Z)}$ \footnote{with a slight abuse of notation as $k_0(Z) $ is not necessarily contained in $k(Z, {\bf{Y}}, T)$.}. In particular, the map
$$\beta \circ {\rm{res}}^{E/k(Z, {\bf{Y}}, T)}_{N/k_0(Z)} : {\rm{Aut}}(E/k(Z, {\bf{Y}}, T)) \rightarrow G$$
is a well-defined isomorphism. Moreover, by Lemma \ref{lem:sssecIII}, the domain of ${\rm{res}}^{E/k(Z, {\bf{Y}}, T)}_{L(Z, {\bf{Y}}, T)/k(Z, {\bf{Y}}, T)}$ is the whole automorphism group ${\rm{Aut}}(E/k(Z, {\bf{Y}}, T))$ and, by \eqref{!!}, one has
$$\alpha_{k(Z, {\bf{Y}},T)} \circ \beta \circ {\rm{res}}^{E/k(Z, {\bf{Y}}, T)}_{N/k_0(Z)} = {{\rm{res}}^{L(Z, {\bf{Y}},T)/k(Z, {\bf{Y}},T)}_{L/k}}^{-1} \circ \alpha \circ \beta \circ {\rm{res}}^{E/k(Z, {\bf{Y}}, T)}_{N/k_0(Z)} = {\rm{res}}^{E/k(Z, {\bf{Y}}, T)}_{L(Z, {\bf{Y}},T)/k(Z, {\bf{Y}},T)}.$$

Finally, $E$ is regular over $L$. Indeed, by Proposition \ref{lemma 11}, $P_y(T,X)$ is irreducible over $\overline{N}(T)$, i.e., $E$ is regular over $N$. Moreover, $N$ is regular over $Lk_0$ and $k_0$ is regular over $L'$. Then, from the latter and, e.g., \cite[Corollary 2.6.8(a)]{FJ08}, $Lk_0$ is regular over $L$. It then remains to apply, e.g., \cite[Corollary 2.6.5(a)]{FJ08} to finish the proof of Proposition \ref{thm:cst}.

\appendix

\section{Proofs of Proposition \ref{prop:pac} and Lemma \ref{prop:desc}} \label{app}

\subsection{Proof of Proposition \ref{prop:pac}} \label{app1.1}

Let $k$ be a PAC field and $\alpha : G \rightarrow {\rm{Gal}}(L/k)$ a finite Galois embedding problem over $k$. We show below that $\alpha$ has a geometric Galois solution.

Firstly, we provide a geometric Galois solution to some finite Galois split embedding problem over $k$ which ``dominates" $\alpha$. Let $k^{\rm{sep}}$ be a separable closure of $k$. As $k$ is PAC, one may apply \cite[Theorem 11.6.2]{FJ08} to get the existence of a (continuous) homomorphism $\gamma: {\rm{Gal}}(k^{\rm{sep}}/k) \rightarrow G$ that satisfies
\begin{equation} \label{D1}
\alpha \circ \gamma = {\rm{res}}^{k^{\rm{sep}}/k}_{L/k}.
\end{equation}
Let $L'$ denote the fixed field of ${\rm{ker}}(\gamma)$ in $k^{\rm{sep}}$. The extension $L'/k$ is finite and Galois, and the homomorphism $\gamma$ induces a homomorphism $\gamma' : {\rm{Gal}}(L'/k) \rightarrow G$ that satisfies
\begin{equation} \label{D2}
\gamma' \circ {\rm{res}}^{k^{\rm{sep}}/k}_{L'/k} = \gamma.
\end{equation}
By \eqref{D1} (which implies  $L \subseteq L'$) and \eqref{D2}, one has
\begin{equation} \label{D3}
\alpha \circ \gamma'= {\rm{res}}^{L'/k}_{L/k}.
\end{equation}
Let
$$G'= \{(g, \sigma) \in G \times {\rm{Gal}}(L'/k) \, \, : \, \, \alpha(g) = {\rm{res}}^{L'/k}_{L/k}(\sigma)\}$$
denote the fiber product of $G$ and ${\rm Gal}(L'/k)$ over ${\rm Gal}(L/k)$ and let $\alpha' : G' \rightarrow {\rm{Gal}}(L'/k)$ be the projection on the second coordinate. Set
$$\delta: \left \{ \begin{array} {ccc}
 {\rm{Gal}}(L'/k) & \longrightarrow & G' \\
   \sigma & \longmapsto & (\gamma'(\sigma), \sigma).
\end{array} \right.$$
By \eqref{D3}, the map $\alpha'$ is surjective and the map $\delta$ is a well-defined homomorphism. Moreover, one has $\alpha' \circ \delta = {\rm{id}}_{{\rm{Gal}}(L'/k)}.$ In particular, the finite Galois embedding problem $\alpha'$ over $k$ splits. Now, since the field $k$ is PAC, it is ample. One may then apply \cite[Main Theorem A]{Pop96} to get the existence of an indeterminate $T$, a finite Galois extension $E/k(T)$ such that $L'(T) \subseteq E$ and $E \cap \overline{k}=L'$, and an isomorphism $\beta' : {\rm{Gal}}(E/k(T)) \rightarrow G'$ such that the following equality holds:
\begin{equation} \label{D4}
\alpha' \circ \beta' = {\rm{res}}^{E/k(T)}_{L'/k}.
\end{equation}

Secondly, we deduce a Galois solution to $\alpha_{k(T)}$. Let $\beta : G' \rightarrow G$ be the projection on the first coordinate. Clearly, one has $\alpha \circ \beta = {\rm{res}}^{L'/k}_{L/k} \circ \alpha'.$
Then, by \eqref{D4}, one has
\begin{equation} \label{D6}
\alpha \circ \beta \circ \beta' = {\rm{res}}^{E/k(T)}_{L/k}.
\end{equation}
Let $E'$ be the fixed field of ${\rm{ker}}(\beta \circ \beta')$ in $E$. The epimorphism $\beta \circ \beta'  : {\rm{Gal}}(E/k(T)) \rightarrow G$ induces an isomorphism $\epsilon : {\rm{Gal}}(E'/k(T)) \rightarrow G$ that satisfies
\begin{equation} \label{D7}
\epsilon \circ {\rm{res}}^{E/k(T)}_{E'/k(T)} = \beta \circ \beta'.
\end{equation}
By the definition of $\beta$, one has
\begin{equation} \label{D7.5}
{\rm{ker}}(\beta) = \{1_G\} \times {\rm{Gal}}(L'/L).
\end{equation}
Then combine \eqref{D4} and \eqref{D7.5} to get $L(T) \subseteq E'$. It then remains to combine \eqref{D6}, \eqref{D7}, and the latter inclusion to get
$$\alpha_{k(T)} \circ \epsilon = {\rm{res}}^{E'/k(T)}_{L(T)/k(T)}.$$

Thirdly, we show that $E'/L$ is regular. Since $E'=E^{{\rm ker}(\beta\circ\beta')}$ and $E \cap \overline{k} = L'$, one has
$$L \subseteq E' \cap \overline{k} \subseteq E' \cap E \cap \overline{k} = E' \cap L' \subseteq L'^{{\rm{res}}^{E/k(T)}_{L'/k}({\rm ker}(\beta\circ\beta'))}.$$
Then use successively \eqref{D4} and \eqref{D7.5} to get
$${\rm{res}}^{E/k(T)}_{L'/k}({\rm ker}(\beta\circ\beta')) = \alpha'({\rm{ker}}(\beta)) = {\rm{Gal}}(L'/L).$$
Hence, one has $L \subseteq  E' \cap \overline{k} \subseteq L'^{{\rm{Gal}}(L'/L)}=L$, thus ending the proof of Proposition \ref{prop:pac}.

\subsection{Proof of Lemma \ref{prop:desc}} \label{app1.2}

By our assumption, there exist an extension $M/k$ such that the fields $M$ and $L$ are linearly disjoint over $k$, an indeterminate $T$, a finite Galois extension $E/M(T)$ such that $LM(T) \subseteq E$ and $E \cap \overline{M} = LM$, and an isomorphism $\beta : {\rm{Gal}}(E/M(T)) \rightarrow G$ such that
\begin{equation} \label{!}
\alpha \circ \beta = {\rm{res}}^{E/M(T)}_{L/k}.
\end{equation}

Let $x \in E$ be such that $E=M(T,x)$. Let $P(X) \in M(T)[X]$ be the minimal polynomial of $x$ over $M(T)$. Let 
$$x_2, \dots, x_{|G|}$$ 
be the roots of $P(X)$ that are not equal to $x$. As the extension $E/M(T)$ is Galois, for each $i \in \{2, \dots, |G|\}$, there is a polynomial $P_i(X) \in M(T)[X]$ such that $x_i=P_i(x).$ Also, pick $z \in L$ such that $L=k(z)$. Then there is $Q(X) \in M(T)[X]$ such that $z=Q(x).$ Let $k_0$ be a subfield of $M$ that is finitely generated over $k$ and such that
$$P(X), P_2(X), \dots, P_{|G|}(X), Q(X) \in k_0(T)[X].$$
Then the extension $k_0(T,x)/k_0(T)$ is Galois, one has $L \subseteq k_0(T, x)$, and the restriction map $${\rm{res}}^{E/M(T)}_{k_0(T,x)/k_0(T)}$$ 
is an isomorphism. Moreover, as the fields $M$ and $L$ are linearly disjoint over $k$ and $k \subseteq k_0 \subseteq M$, the fields $k_0$ and $L$ are linearly disjoint over $k$. That is, ${\rm{res}}^{Lk_0/k_0}_{L/k}$ is an isomorphism.
\begin{figure}[h!]
\[ \xymatrix{
& & E = M(T,x)  \ar@{-}[d]\ar@{-}[rd] \ar@{-} @/_3pc/[dd]_{G} & & & \\
& & LM(T) \ar@{-} @/_1pc/[d]_{\Gamma} \ar@{-}[d] \ar@{-}[rd] & k_0(T,x) \ar@{-} @/_3pc/[dd]_{G} \ar@{-}[d]  & &\\
& LM \ar@{-} @/_0.5pc/[d]_{\Gamma} \ar@{-}[ru] \ar@{-}[d] &M(T) \ar@{-}[rd] & Lk_0(T) \ar@{-} @/_1pc/[d]_{\Gamma} \ar@{-}[rd] \ar@{-}[d] && \\
L \ar@{-} @/_1pc/[d]_{\Gamma} \ar@{-}[d] \ar@{-}[ru] &M \ar@{-}[ru] & &k_0(T) \ar@{-}[rd] & Lk_0 \ar@{-} @/_0.5pc/[d]_{\Gamma} \ar@{-}[d] \ar@{-}[rd]&\\
k \ar@{-}[ru]& & &  &k _0 \ar@{-}[rd] & L \ar@{-} @/_0.5pc/[d]_{\Gamma} \ar@{-}[d]\\
& & & & &k}
\]
\caption{Defining the field $E$ over a suitable extension $k_0$ of $k$}\label{Fig6b}
\end{figure}
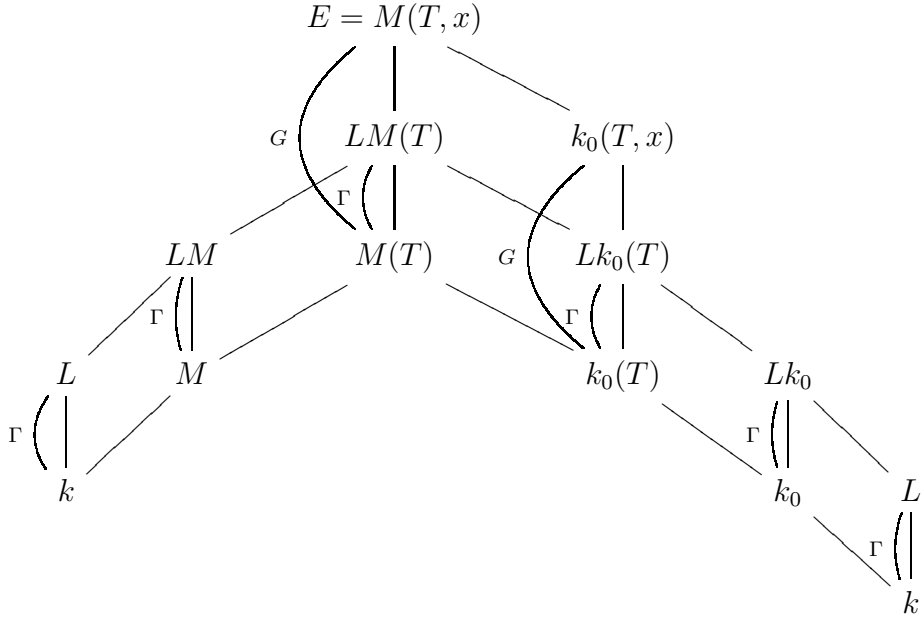

Then consider the finite Galois embedding problem $\alpha_{k_0(T)}$. The map
$$\beta \circ {{\rm{res}}^{E/M(T)}_{k_0(T,x)/k_0(T)}}^{-1} : {\rm{Gal}}(k_0(T,x)/k_0(T)) \rightarrow G$$
is an isomorphism and one has $Lk_0(T) \subseteq k_0(T,x)$. Furthermore, by \eqref{!} and since 
$$\alpha_{k_0(T)} = {{\rm{res}}^{Lk_0(T)/k_0(T)}_{Lk_0/k_0}}^{-1} \circ {{\rm{res}}^{Lk_0/k_0}_{L/k}}^{-1} \circ \alpha,$$ 
one has
$\alpha_{k_0(T)} \circ \beta \circ {{\rm{res}}^{E/M(T)}_{k_0(T,x)/k_0(T)}}^{-1}={\rm{res}}^{k_0(T,x)/k_0(T)}_{Lk_0(T)/k_0(T)}.$

Finally, $k_0(T, x) \cap \overline{k_0} = Lk_0$. Indeed, combine
$k_0(T,x) \cap \overline{k_0} \subseteq  E \cap \overline{M} = LM \subseteq LM(T)$
and the linear disjointness of the fields $LM(T)$ and $k_0(T,x)$ over $Lk_0(T)$ to get 
$$k_0(T,x) \cap \overline{k_0} \subseteq k_0(T,x) \cap LM(T) \cap \overline{k_0}= Lk_0(T) \cap \overline{k_0} = Lk_0,$$
thus ending the proof of Lemma \ref{prop:desc}.

\end{document}